\newtheorem {Theorem}  {Theorem}
\newtheorem {Problem} {Problem}
\begin{document}
\baselineskip = 15pt
\bibliographystyle{plain}

\title{Undecidability of Translational Tiling of the $3$-dimensional Space\\ with a Set of $6$ Polycubes}
\date{}
\author{Chao Yang\\ 
              School of Mathematics and Statistics\\
              Guangdong University of Foreign Studies, Guangzhou, 510006, China\\
              sokoban2007@163.com, yangchao@gdufs.edu.cn\\
              \\
        Zhujun Zhang\\
              Government Data Management Center of\\
              Fengxian District, Shanghai, 201499, China\\
              zhangzhujun1988@163.com
              }
\maketitle

\begin{abstract}
This paper focuses on the undecidability of translational tiling of $n$-dimensional space $\mathbb{Z}^n$ with a set of $k$ tiles. It is known that tiling $\mathbb{Z}^2$ with translated copies with a set of $8$ tiles is undecidable. Greenfeld and Tao gave strong evidence in a series of works that for sufficiently large dimension $n$, the translational tiling problem for $\mathbb{Z}^n$ might be undecidable for just one tile. This paper shows the undecidability of translational tiling of $\mathbb{Z}^3$ with a set of $6$ tiles.
\end{abstract}

\noindent{\textbf{Keywords}}:
tiling, polycube, translation, undecidability\\
MSC2020: 52C20, 68Q17

\section{Introduction} 

The phenomenon of aperiodicity and undecidability in tiling was discovered after Hao Wang introduced the plane tiling problem with Wang tiles \cite{wang61}. A \textit{Wang tile} is a unit square with each edge assigned a color. Given a finite set of Wang tiles (see Figure \ref{fig_wang_set} for an example), Wang considered the problem of tiling the entire plane with translated copies from the set, under the conditions that the tiles must be edge-to-edge and the color of common edges of any two adjacent Wang tiles must be the same. This is known as \textit{Wang's domino problem}.

\begin{figure}[H]
\begin{center}
\begin{tikzpicture}

\draw [fill=green!80] (0,0)--(1,1)--(0,2)--(0,0);
\draw [fill=red!80] (0,0)--(2,0)--(0,2)--(2,2)--(0,0);
\draw [fill=yellow!80] (1,1)--(2,2)--(2,0)--(1,1);

\draw [fill=yellow!80] (3+0,0)--(3+1,1)--(3+0,2)--(3+0,0);
\draw [fill=blue!80] (3+0,0)--(3+2,0)--(3+0,2)--(3+2,2)--(3+0,0);
\draw [fill=red!80] (3+1,1)--(3+2,2)--(3+2,0)--(3+1,1);

\draw [fill=red!80] (6+0,0)--(6+1,1)--(6+0,2)--(6+0,0);
\draw [fill=yellow!80] (6+0,0)--(6+2,0)--(6+0,2)--(6+2,2)--(6+0,0);
\draw [fill=green!80] (6+1,1)--(6+2,2)--(6+2,0)--(6+1,1);

\end{tikzpicture}
\end{center}
\caption{A set of $3$ Wang tiles.}\label{fig_wang_set}
\end{figure}
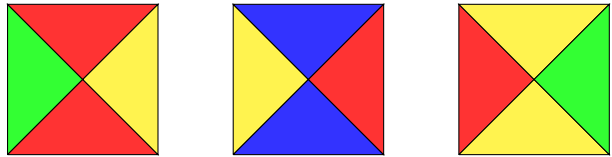

Wang's domino problem was shown to be undecidable by Berger \cite{b66}, and is one of the earliest known undecidable problems. As a crucial part of the proof of undecidability, Berger found a set of Wang tiles that tiles the plane but only tiles the plane non-periodically. Such a set is called \textit{aperiodic}. By combining the facts of the existence of an aperiodic set of Wang tiles and the ability to simulate the Turing machine with Wang tiles, Berger managed to show that Wang's domino problem is undecidable.

\begin{Theorem}[\cite{b66}]\label{thm_berger}
    Wang's domino problem is undecidable.
\end{Theorem}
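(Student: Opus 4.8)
The plan is to reduce a known undecidable problem -- the halting problem for Turing machines on the blank tape -- to Wang's domino problem, via an \emph{effective} map. From a Turing machine $M$ I would compute a finite set of Wang tiles $\tau_M$ with the property that $\tau_M$ admits a valid tiling of the plane if and only if $M$ fails to halt when started on the empty tape. Since the non-halting problem is undecidable, the existence of such a computable assignment $M \mapsto \tau_M$ immediately yields the undecidability of the domino problem, which is the assertion of the theorem.

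First I would build the combinatorial scaffold needed to lay out a space--time diagram of a computation on the integer lattice $\mathbb{Z}^2$. The obstacle here is that a Wang tiling has no distinguished origin and no a priori bound on its period, so a naive set of ``computation tiles'' would be satisfied by a degenerate periodic tiling carrying no computation at all. To defeat this I would first construct an \emph{aperiodic} set of Wang tiles: a set $\tau_0$ that tiles the plane but admits no tiling invariant under a nonzero translation. The classical route is to realize a self-similar (substitution) structure -- design the colors so that in any tiling the plane is forced to organize into $N \times N$ blocks, those blocks are forced to organize into $N^2 \times N^2$ blocks, and so on through every scale. Every tiling then contains arbitrarily large, cleanly delimited square regions, which is incompatible with translation invariance.

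Next I would superimpose the Turing-machine simulation on this scaffold. On the blocks at each level of the hierarchy I place ``wire'' and ``computation'' tiles encoding, on successive rows, the successive configurations of $M$ -- a row records the tape contents together with the head position and internal state -- and the matching rules between vertically adjacent rows force each row to be the successor of the row below under the transition function of $M$. The aperiodic scaffold guarantees arbitrarily large clean regions on which a genuine, arbitrarily long initial segment of the computation of $M$ must appear. I also include a special ``halting'' tile possessing a color that no tile can match, so that if $M$ ever enters a halting state the tiling cannot be completed; conversely, if $M$ runs forever one checks that the scaffold together with the computation layer extends consistently to all of $\mathbb{Z}^2$. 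Hence $\tau_M$ tiles the plane iff $M$ does not halt.

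The main obstacle is the first step: producing the aperiodic tile set together with a proof that the hierarchical structure is genuinely \emph{forced} by the local matching rules in every tiling, not merely permitted. The secondary delicate point is the bookkeeping that lets the computation layer ride on the hierarchy without disturbing the forcing -- in particular, arranging that the simulation resets correctly at each block boundary and that partial computations truncated near block edges never create a spurious obstruction in the case where $M$ does not halt.
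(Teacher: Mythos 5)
Your outline is the standard Berger argument, and it matches exactly the route this paper attributes to \cite{b66}: the paper does not reprove the theorem but cites it, describing the proof as the combination of an aperiodic set of Wang tiles (forced hierarchical structure) with a Turing-machine simulation riding on that scaffold, reducing from the halting problem. Your proposal is the same approach at the same level of detail, with the genuinely hard steps (forcing the hierarchy, and the bookkeeping for the computation layer) correctly identified as the content of Berger's construction rather than filled in.
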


Since Berger's proof of the undecidability of Wang's domino problem, researchers have found aperiodic sets of Wang tiles with fewer and fewer tiles in the following decades \cite{c96,kari96,r71}. Finally, with the help of computers, Jeandel and Rao proved that $11$ is the smallest possible size of an aperiodic set of Wang tiles \cite{jr21}. Complete characterization of plane tiling by Wang tiles with two colors is obtained \cite{hl11}. Aperiodic sets of even smaller sizes can be found in the general settings of plane tiling (e.g. rotation and reflection of the tiles are allowed in addition to translation). Notably, Penrose first found an aperiodic set of two tiles \cite{p79}. More strikingly, a family of aperiodic monotiles has been discovered by Smith, Myers, Kaplan, and Goodman-Strauss \cite{smith23a,smith23b}, which solves the long-standing Einstein problem. For more aperiodic tiling sets and their applications in quasicrystal, we refer to the books \cite{bg13, gs16}.

\begin{figure}[h]
\begin{center}
\begin{tikzpicture}[scale=0.6]

\draw [ fill=red!80] (0,0)--(1,1)--(0,2)--(0,0);
\draw [ fill=yellow!80] (0,0)--(1,1)--(2,0)--(0,0);
\draw [ fill=blue!80] (2,0)--(1,1)--(2,2)--(2,0);
\draw [ fill=green!80] (0,2)--(1,1)--(2,2)--(0,2);
 
\draw [ fill=blue!80] (3,0)--(4,1)--(3,2)--(3,0);
\draw [ fill=green!80] (3,0)--(4,1)--(5,0)--(3,0);
\draw [ fill=orange!80] (5,0)--(4,1)--(5,2)--(5,0);
\draw [ fill=yellow!80] (3,2)--(4,1)--(5,2)--(3,2);

\draw [thick](8,0)--(11,0)--(11,-1)--(11.5,-1)--(11.5,0)--(12,0)--(12,1.5)--(13,1.5)--(13,3)--(12.5,3)--(12.5,2)--(12,2)--(12,4)--(9.5,4)--(9.5,3.5)--(8.5,3.5)--(8.5,4)--(8,4)--(8,1.5)--(9,1.5)--(9,0.5)--(8.5,0.5)--(8.5,1)--(8,1)--(8,0);

\draw [thick](14,0)--(14.5,0)--(14.5,-0.5)--(15.5,-0.5)--(15.5,0)--(18,0)--(18,0.5)--(18.5,0.5)--(18.5,1)--(18,1)--(18,4)--(17.5,4)--(17.5,3)--(17,3)--(17,4)--(14,4)--(14,2)--(14.5,2)--(14.5,3)--(15,3)--(15,1.5)--(14,1.5)--(14,0);

\end{tikzpicture}
\end{center}
\caption{Wang tiles simulated by polyominoes.} \label{fig_golomb}
\end{figure}
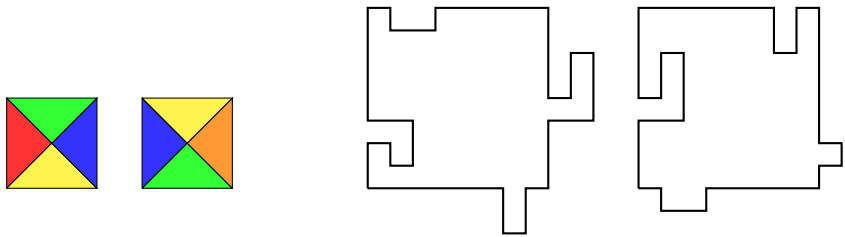

In another trend, Ollinger initiated the study of the undecidability of translational tiling of the plane with a set of a fixed number of tiles by introducing the following $k$-polyomino tiling problem.

\begin{Problem}[$k$-Polyomino Tiling Problem] 
For a fixed positive integer $k$, is there an algorithm to decide whether a set of $k$ polyominoes can tile the plane by translated copies from the set?    
\end{Problem}

It has been shown by Golomb \cite{g70} that Wang's domino problem can be reduced to the problem of tiling with a set of polyominoes. Golomb's reduction method can be illustrated in Figure \ref{fig_golomb}, where a set of Wang tiles is emulated by a set of polyominoes with the same number. The color of Wang tiles can be simulated by bumps and dents added to each side of a large square polyomino. Therefore, the problem of tiling with a set of polyominoes is undecidable in general (i.e. the number of polyominoes is not fixed). Ollinger proved that it remains undecidable even if the number of polyominoes is fixed. 

\begin{Theorem}[\cite{o09}]
    The $11$-polyomino tiling problem is undecidable. 
\end{Theorem}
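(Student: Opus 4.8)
The plan is to reduce Wang's domino problem (undecidable by Theorem~\ref{thm_berger}) to the $11$-polyomino tiling problem, strengthening Golomb's reduction so that the number of polyominoes produced does \emph{not} depend on the input Wang tile set. Given a set $\tau$ of Wang tiles over a color set $C$, the goal is to build a set $P(\tau)$ of exactly $11$ polyominoes such that $P(\tau)$ admits a translational tiling of the plane if and only if $\tau$ admits a valid (edge-to-edge, color-matched) tiling of the plane; since $\tau \mapsto P(\tau)$ will be computable and $11$ is a constant, undecidability transfers.

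The core difficulty, and the reason Golomb's $|\tau|$-to-$|\tau|$ reduction is not enough, is that both $|\tau|$ and $|C|$ are unbounded, whereas we are allowed only $11$ shapes; I would resolve this by \emph{binarizing} all data. Each color is written as a bit string of length $\lceil \log_2 |C|\rceil$, and a cell of the prospective Wang tiling is represented by a large square macro-region whose four sides carry these bit strings through narrow channels. I would then design: (i) a small fixed family of \emph{wire} and \emph{corner} polyominoes that route bits along channels, together with a \emph{$0$-bit} polyomino and a \emph{$1$-bit} polyomino, so that the requirement ``east color of a cell $=$ west color of its right neighbor'' is enforced purely by how $0$- and $1$-bit pieces interlock across a shared channel; (ii) a \emph{verification gadget} inside each macro-region, where the four incident bit strings are brought together and read, bit by bit, by a bounded finite-control mechanism that checks the quadruple $(N,E,S,W)$ against the list of legal tiles of $\tau$. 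Crucially, that list is itself inlaid as a long word of $0$-bit and $1$-bit polyominoes placed into slots of a fixed \emph{table-frame} polyomino, so enlarging $\tau$ only lengthens this word and never introduces a new shape. A careful accounting of the pieces --- macro-frame, table-frame, a few wire/corner shapes, the two bit shapes, one or two finite-control ``head/state'' shapes, and a couple of filler polyominoes padding leftover cavities --- is then arranged to total exactly $11$.

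Correctness splits into two implications. The forward direction (a Wang tiling of $\tau$ yields a tiling by $P(\tau)$) is a direct construction: place one consistently filled macro-region per Wang tile of the given tiling, with bit channels, verification gadgets, and fillers laid in the intended pattern. The reverse direction is where the real work lies, and I expect it to be the main obstacle: one must prove \emph{rigidity}, i.e., that \emph{every} translational tiling by the $11$ polyominoes is forced into the intended global layout --- a grid of macro-regions, each with the intended internal wiring and a correctly verified tile label. This is a local-to-global argument: from areas, from the shapes of notches and protrusions, and from corner/parity considerations, one shows that each polyomino can abut its neighbors only in the designed ways, and that these local constraints propagate to pin down the whole tiling up to exactly the freedom that encodes a choice of valid Wang tiling. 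Designing the $11$ shapes so that this rigidity holds --- leaving no illegal way to interlock or to fill a cavity --- while keeping the count at $11$ is the delicate engineering at the heart of the proof, and it is precisely this kind of construction, pushed into $\mathbb{Z}^3$ where the extra dimension gives more room to route and verify information, that will allow the tile count to be lowered in the three-dimensional setting treated in this paper.
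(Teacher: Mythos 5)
This theorem is not proved in the paper you were given; it is quoted from Ollinger's work, and the paper's Section 3 adapts Ollinger's actual method to three dimensions. Your overall reduction strategy --- a computable map from a Wang tile set $\tau$ to a fixed-size polyomino set, binary encoding of colors by bump/dent positions transmitted along wires, and a rigidity (``local-to-global'') argument for the reverse implication --- matches the genuine proof in outline. But the central mechanism you propose is different from Ollinger's, and it is exactly the part you leave unconstructed that would fail. Your plan hinges on a ``verification gadget'' in which a ``bounded finite-control mechanism'' reads the four incident bit strings and checks the quadruple $(N,E,S,W)$ against a table of legal tiles inlaid as a word of bit-polyominoes. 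Implementing a lookup/comparison computation with a constant number of translational polyomino shapes is not a routine engineering step; it is the whole difficulty of the theorem, and nothing in your sketch explains how a fixed set of shapes can compare an incoming bit string against an arbitrary, input-dependent table and reject mismatches. As written, the argument that the count comes out to exactly $11$ is also pure bookkeeping over gadgets that have not been shown to exist.

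Ollinger's resolution of this difficulty, which the present paper inherits, avoids any in-tiling computation. All $k$ Wang tiles are concatenated into a single large ``meat'' polyomino, as $k$ nearly identical segments whose colors are encoded by the positions of bumps and dents on that segment. A ``jaw'' polyomino has a mouth that can grip at most $k-1$ segments, so in any tiling exactly one simulated Wang tile per cell is left exposed: the nondeterministic \emph{choice} of which Wang tile occupies a cell is made geometrically by which segment sticks out, not by a verification circuit. Consistency of the four colors of the chosen tile is then automatic, because they are carried on one rigid piece, and only the \emph{equality} of adjacent colors needs to be enforced, which the wire/tooth polyominoes do by matching bump heights. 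Fillers occupy the mouths not covered by meat, and the rotational copies of wires and teeth needed under translation-only tiling are what push the count to $11$. If you replace your table-lookup gadget with this select-by-occlusion idea, the rest of your outline (binary color channels, rigidity of the macro-grid) aligns with the actual proof.
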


Based on the existence of an aperiodic set of $8$ polyominoes \cite{ags92}, Ollinger conjectured that the $8$-polyomino tiling problem is undecidable\cite{o09}. Ollinger's conjecture has been confirmed in a series of recent works by Yang and Zhang \cite{yang23,yang23b,yz24}.  

\begin{Theorem}[\cite{yz24}]
   The $8$-polyomino tiling problem is undecidable. 
\end{Theorem}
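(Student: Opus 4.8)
Following Ollinger's strategy for the $11$-polyomino case and its subsequent refinements, the plan is to reduce from Wang's domino problem, which is undecidable by Theorem~\ref{thm_berger}. Given a finite set $W$ of Wang tiles with colors from a set of size $c$, I would construct a set $\mathcal{P}(W)$ of exactly $8$ polyominoes, computable from $W$, such that $\mathcal{P}(W)$ tiles the plane (equivalently $\mathbb{Z}^2$) by translations if and only if $W$ admits a valid Wang tiling of the plane. Both directions of this equivalence will be effective, so an algorithm for the $8$-polyomino tiling problem would decide Wang's domino problem, a contradiction. It may be convenient to factor the reduction through an intermediate tiling problem with a fixed number of more structured building blocks (``generalized Wang bars'') before realizing everything geometrically, but the logical skeleton is the same.

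The core of the construction is a macro-tile encoding. Golomb's naive simulation (Figure~\ref{fig_golomb}) spends one polyomino per Wang tile, so its count grows with $|W|$; to pin the count at $8$, each simulated Wang tile is instead assembled from many copies of a small, \emph{color-independent} repertoire of shapes into a large rectangular ``brick'' whose four sides spell out the tile's four colors as finite binary strings of bumps and dents. The tight budget of $8$ forces heavy overloading of the shapes: a few play the role of ``skeleton'' pieces that delimit and align the bricks, one or two play the role of ``wires'' that transport a color value across a brick and present it at the interface for comparison with the neighbor, and the remainder are ``fillers'' that pack interiors and leftover channels. The Wang matching rule---equal colors on a shared edge---is enforced purely geometrically: a bump protruding from one brick can only be received by a complementary dent on the adjacent brick, so a consistent polyomino tiling exists exactly when the bump/dent profiles agree across every interface, i.e.\ exactly when the underlying coloring is a legal Wang tiling.

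The forward implication is the easy one: from a legal Wang tiling I place the corresponding bricks on a scaled lattice and cover every remaining cell with the designated filler pieces, using that neighboring bricks present complementary boundary profiles. The backward implication is where essentially all the work lies and is the main obstacle: I must prove a rigidity/normalization lemma asserting that \emph{every} translational tiling of $\mathbb{Z}^2$ by $\mathcal{P}(W)$ necessarily organizes into the intended brick grid, with no ``exotic'' tilings---shifted fault lines, pieces interlocking across channels that should be separate, fractured skeletons, wires that wander or fail to terminate at interfaces, and so on. This demands a careful finite local analysis: classifying the ways each of the $8$ shapes can abut itself and the others, showing the skeleton pieces can assemble only into the grid, showing wires are forced to run straight and end at interfaces, and showing fillers cannot conspire to break the grid. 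The difficulty is magnified precisely because the small shape budget overloads each piece with several roles, so the case analysis must handle all those roles at once; taming this combinatorial explosion---ideally through a handful of structural invariants (a weighting or coloring of $\mathbb{Z}^2$ preserved by every legal placement, a ``no long straight fault except at an interface'' principle, parity or counting arguments on the skeleton pieces) rather than raw enumeration---is the crux. Once the rigidity lemma is established, reading the colors off the forced brick boundaries yields a valid Wang tiling of the plane, which closes the reduction.
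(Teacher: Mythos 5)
This theorem is not proved in the paper at all: it is quoted from reference \cite{yz24}, and the present paper only reuses its framework (adapted to three dimensions) in the proof of Theorem~\ref{thm_main}. So the comparison has to be against that framework, and against it your proposal has a genuine gap beyond the honestly acknowledged ``rigidity lemma left to do.''

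The gap is the selection mechanism. You propose that each macro-tile (``brick'') be \emph{assembled from many copies of a small, color-independent repertoire of shapes} so that its four sides spell out the four colors of a Wang tile. But if the color-carrying shapes are color-independent and freely assemblable, nothing restricts the quadruple of colors spelled out on a brick's boundary to be one of the quadruples actually occurring in $W$: the tiling could realize arbitrary color combinations, matching edge profiles locally, and hence exist even when $W$ admits no tiling. The reduction would then be unsound. The Ollinger/Yang--Zhang construction (mirrored in this paper by the meat and jaw polycubes) solves exactly this problem by hard-coding the \emph{entire list} of Wang tiles into a single large ``meat'' polyomino as consecutive nearly identical segments, each segment carrying the fixed bump/dent encoding of one tile's four colors, and by designing a ``jaw'' whose mouth can grip all but one segment; the free choice of which segment is left exposed is what simulates the nondeterministic choice of a Wang tile from $W$, while guaranteeing that only legal color quadruples ever appear at an interface. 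Your sketch contains no analogue of this, and without it the rest of the plan (bump/dent matching across interfaces, fillers, rigidity analysis) cannot yield the equivalence ``$\mathcal{P}(W)$ tiles $\mathbb{Z}^2$ iff $W$ tiles the plane.'' Relatedly, the count of $8$ is not something you can budget for abstractly: in the planar translational setting the framework needs several rotated copies of the jaw and wire pieces (rotations are not free under translation-only tiling), and getting from Ollinger's $11$ down to $8$ is precisely the hard bookkeeping of \cite{yz24}, not a consequence of the outline.
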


On the other hand, for $k=1$, it is known that if a single polyomino tiles the plane, then it can tile the plane periodically \cite{bn91}. So the $1$-polyomnino tiling problem is decidable, and there is a fast algorithm to decide whether a polyomino tiles the plane \cite{w15}. The decidability of $k$-polyomino tiling problem is open for $2\leq k\leq 7$. Beauquier and Nivat conjecture that $2$-polyomino tiling problem is decidable \cite{bn91}.

The study of aperiodicity and undecidability of translational tiling of the plane can be naturally extended to spaces of dimensions other than $2$. The problem can be formulated in integer lattice $\mathbb{Z}^n$. A finite subset $T$ of $\mathbb{Z}^n$ is called a \textit{tile}. 

\begin{Problem}[Translational tiling of $\mathbb{Z}^n$ with a set of $k$ tiles] \label{pro_main}
Given a set $S$ of $k$ tiles in $\mathbb{Z}^n$, is there an algorithm to decide whether $\mathbb{Z}^n$ can be tiled by translated copies of $S$?
\end{Problem}

Problem \ref{pro_main} can be equivalently and more geometrically stated as a tiling problem in $\mathbb{R}^n$, where a tile is the union of a finite set of unit hypercubes of the form $\Pi_{i=1}^n [z_i,z_i+1] $ ($z_i\in \mathbb{Z}$). Note that a tile may not be connected in the Euclidean space $\mathbb{R}^n$.

For dimension one ($n=1$), it is shown that for any $k$, if a set of $k$ tiles can tile $\mathbb{Z}$, then it can always tile $\mathbb{Z}$ periodically \cite{s93}, even though the periods can be exponentially long \cite{kolo03, st05}. As a consequence, the problem is decidable in dimension one. For $n=2$ and $k=1$, the periodicity and decidability are shown in \cite{b20}.

The cases for general dimensions have been extensively studied by Greenfeld and Tao \cite{gt21,gt23,gt24a,gt24b}. Greenfeld and Tao show that the translational tiling problem for subsets of $\mathbb{Z}^n$ is undecidable for just one tile, where the dimension $n$ is part of the input of the instances \cite{gt24b}. They also show the existence of aperiodic monotiles for tiling the entire $\mathbb{Z}^n$ \cite{gt24a}. These are strong evidence that the translational tiling of $\mathbb{Z}^n$ with one tile might be undecidable for some sufficiently large fixed  $n$.

The above results can be summarized in Figure \ref{fig_nk} in the $(k,n)$-plane of the parameters, where $k$ is the number of tiles and $n$ is the dimension of the space. The green area is known to be decidable, and the red area is undecidable. Note that the frontier of the area of undecidable is not clearly known yet, especially for dimensions $4$ and above. The dashed line of the boundary of the undecidable area in Figure \ref{fig_nk} is to illustrate the idea that as the dimension goes up, it may require fewer tiles to get undecidable results for translational tiling of $\mathbb{Z}^n$.

\begin{figure}[H]
\begin{center}
\begin{tikzpicture}

\draw [fill=green!20,dashed] (11.9,0)--(0,0)--(0,2)--(1,2)--(1,1)--(11.9,1);

\begin{scope}
    \clip (0,9) rectangle (2,10.9);
\draw [fill=red!20,dashed] (0,11)--(0,9)--(3,9)--(3,11); 
\end{scope}

\begin{scope}
    \clip (5,1) rectangle (11.9,3);
\draw [fill=red!20,dashed] (12,1)--(7,1)--(7,2)--(5,2)--(5,4)--(12,4); 
\end{scope}

\begin{scope}
    \clip (2,3) rectangle (11.9,10.9);

\draw [dashed,fill=red!20] plot [smooth] coordinates {(0,11) (1,9) (2,9) (4,8.5) (4.5,6) (5,3) (5,2) (5.5,1) (12,0) (15,7) (15,15) (5,12)};
\end{scope}

\draw[help lines, color=gray, dashed]  (-0.1,-0.1) grid (11.9,10.9);
\draw[->,ultra thick] (-0.5,0)--(12,0) node[right]{$k$};
\draw[->,ultra thick] (0,-0.5)--(0,11) node[above]{$n$};

\foreach \y in {1,2,3,4}
{
\node at (-0.5,\y-0.5) {\y};
}
\node at (-0.9,9.7) {some};
\node at (-0.8,9.3) {large $n$?};

\foreach \x in {1,...,11}
{
\node at (\x-0.5,-0.5) {\x};
}

\end{tikzpicture}
\end{center}
\caption{Translational tiling problem of $\mathbb{Z}^n$ with a set of $k$ tiles.}\label{fig_nk}
\end{figure}
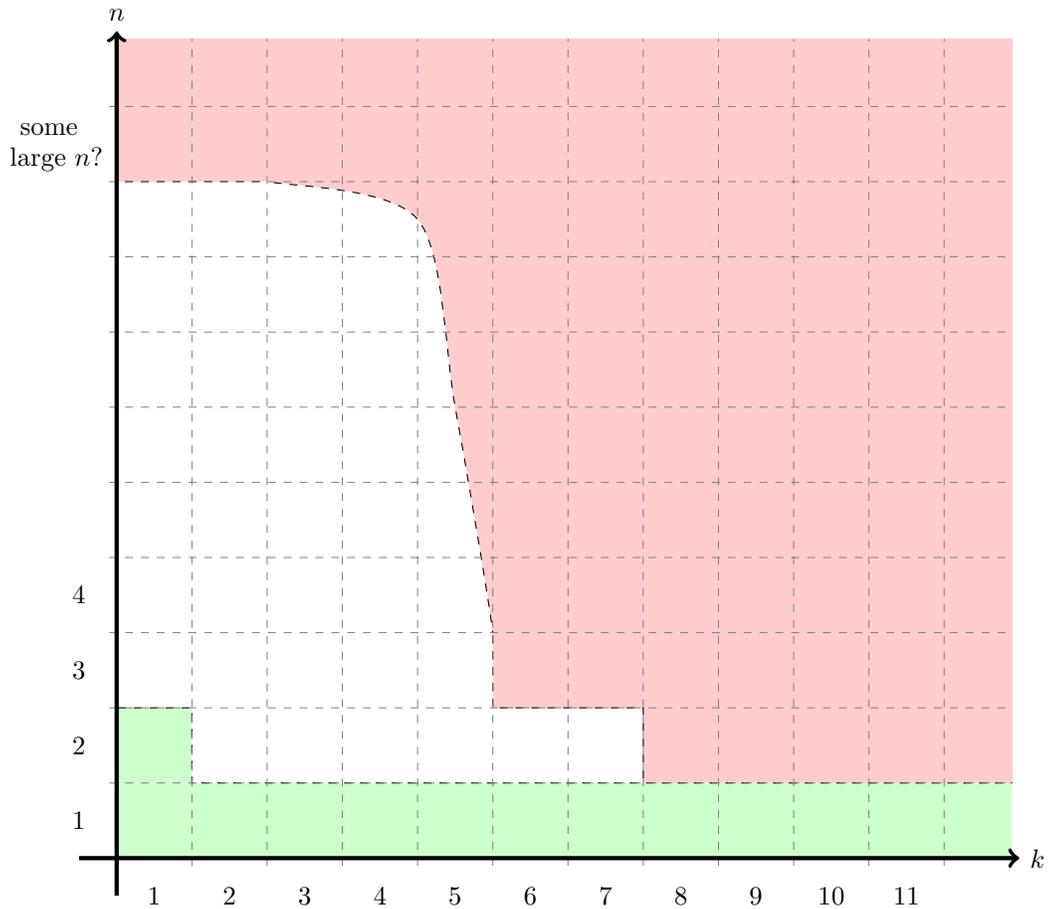

The main contribution of this paper is to enlarge our knowledge of undecidability by showing that the translational tiling of $\mathbb{Z}^n$ is undecidable for $n=3$ and $k=6$, even if all the tiles are connected. This can be stated equivalently in the following theorem.

\begin{Theorem}\label{thm_main}
Translational tiling of $3$-dimensional space with a set of $6$ polycubes is undecidable. 
\end{Theorem}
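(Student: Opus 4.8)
The plan is to prove Theorem~\ref{thm_main} by a computable reduction from Wang's domino problem, which is undecidable by Theorem~\ref{thm_berger}. Starting from a finite set $\mathcal{W}=\{t_1,\dots,t_m\}$ of Wang tiles over a color set $\mathcal{C}$, I would produce a set $\mathcal{S}$ of exactly $6$ polycubes in $\mathbb{Z}^3$, depending effectively on $\mathcal{W}$, such that $\mathcal{S}$ tiles $\mathbb{Z}^3$ by translations if and only if $\mathcal{W}$ tiles $\mathbb{Z}^2$ with the usual edge-matching rule. (It may be more convenient to route the reduction through an intermediate problem already known to be undecidable and carrying some extra rigidity --- a restricted domino problem, or the $8$-polyomino tiling problem used by Yang and Zhang --- but the core difficulty is unchanged.) The construction will be arranged so that any tiling of $\mathbb{Z}^3$ by $\mathcal{S}$ is forced to split into horizontal slabs, each slab encoding a tiling of $\mathbb{Z}^2$ by $\mathcal{W}$, and conversely each Wang tiling of $\mathbb{Z}^2$ lifts to a ($z$-periodic) tiling of $\mathbb{Z}^3$; thus tileability transfers in both directions. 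The steps are then: (1) fix the reduction source; (2) design the six polycube shapes and the encoding; (3) verify the easy direction; (4) prove the rigidity lemma; (5) conclude.

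The technical core of step~(2) is a \emph{macro-tile} encoding in the spirit of Golomb's emulation of Wang tiles by polyominoes (Figure~\ref{fig_golomb}): each cell of the prospective Wang tiling is represented by a large box of unit cubes, and the color shared by two adjacent Wang tiles is recorded by an interlocking ``teeth-and-notches'' pattern on the common face of the corresponding macro-cells. The whole point --- the reason the number of polycubes can be held fixed at $6$ no matter how large $m$ and $|\mathcal{C}|$ are --- is that neither the identity of the Wang tile sitting in a macro-cell nor its four side colors may be hard-coded into the shape of a single polycube, since that would make the number of shapes grow with $m$. Instead I would use a small fixed repertoire: a ``bulk'' polycube forming most of a macro-cell, one or two ``carrier'' polycubes that transmit a color code along a row or a column, and ``frame/synchronization'' polycubes that lock the macro-cells into a rigid grid; the Wang data is then stored combinatorially, in \emph{where} the carrier pieces sit relative to that grid. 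One natural way to exploit the third dimension here is that the color signal travelling in the $x$-direction can be carried on one $z$-level and the signal travelling in the $y$-direction on another, so that they do not interfere and a single carrier shape serves both directions, while a single carefully engineered three-dimensional polycube can also absorb several of the separate enforcing roles that required distinct polyominoes in $\mathbb{Z}^2$. The numerical parameters (tooth length, macro-cell side length, vertical period) are chosen large relative to $|\mathcal{C}|$ so that different colors yield provably incompatible tooth patterns and so that no pair of pieces can mesh except in the intended way.

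The easy direction (step~(3)) is routine: given a valid Wang tiling of $\mathbb{Z}^2$, place the macro-cells and carrier pieces accordingly, fill one slab, and repeat it periodically in $z$ to tile $\mathbb{Z}^3$. The main obstacle --- step~(4), where I expect almost all of the work to be --- is the converse, the \emph{rigidity lemma}: proving that \emph{every} translational tiling of $\mathbb{Z}^3$ by $\mathcal{S}$ is forced into the intended global form. One must show that in any such tiling the unit cubes aggregate into the intended macro-cells; that the macro-cells align into a grid isomorphic to $\mathbb{Z}^2$ (excluding shears, fault lines, fractional offsets, and pieces straddling several macro-cells); that each macro-cell carries the pieces of exactly one encoded Wang tile; and that the meshing of adjacent macro-cells forces their shared side colors to coincide. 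This is a long but finite case analysis on the local ways in which $6$ polycubes can fit together, and the shapes must be designed with enough care that every unintended local configuration is provably impossible. Granting the rigidity lemma, a tiling of $\mathbb{Z}^3$ by $\mathcal{S}$ and a tiling of $\mathbb{Z}^2$ by $\mathcal{W}$ exist together or not at all, so an algorithm for the former would decide the latter, contradicting Theorem~\ref{thm_berger}; hence translational tiling of $\mathbb{Z}^3$ with a set of $6$ polycubes is undecidable.
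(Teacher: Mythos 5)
Your proposal correctly identifies the reduction source (Wang's domino problem, Theorem~\ref{thm_berger}), the requirement that the six shapes depend effectively on the Wang tile set while their number stays fixed, and the two-direction structure (an easy lifting direction plus a rigidity lemma for the converse). All of that matches the paper's strategy. But as written it is a plan rather than a proof: the six polycubes are never constructed, and the rigidity lemma --- which you yourself flag as the place where ``almost all of the work'' lies --- is never proved. For this theorem the reduction schema itself is standard (it goes back to Golomb \cite{g70} and Ollinger \cite{o09}); the entire mathematical content is the explicit design of the six shapes and the finite case analysis showing that no unintended configuration can occur. A proposal that says ``design six shapes so that the rigidity lemma holds'' assumes exactly what has to be demonstrated, and the history of the problem (Ollinger needed $11$ polyominoes in the plane, and getting down to $8$ took a series of papers) shows that the count cannot be taken on faith.

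The mechanism you gesture at also differs from the one the paper actually uses, and it is not clear it can be realized with six shapes. You propose one macro-cell per lattice position, with ``carrier'' pieces whose placement encodes the Wang tile identity and color signals travelling on different $z$-levels; enforcing that a macro-cell carries exactly one consistent tile identity under such a scheme typically costs extra shapes. The paper instead follows Ollinger's meat/jaw framework: a single \emph{meat} polycube encodes the \emph{entire} Wang tile set as $k$ nearly identical segments; each meat is gripped by jaws from two sides so that exactly one segment is left exposed, and that exposed segment is the Wang tile selected at that position; colors are binary-encoded by whether small blocks $c$ are attached to the upper or lower half of the meat; a \emph{link} has bumps at both ends at a fixed altitude, so it can join two $c$ blocks if and only if they are at equal altitude, which is precisely the edge-color matching constraint; a \emph{filler} and a single \emph{tooth} fill the residual gaps, and the blocks $\mathbb{M}$, $\mathbb{J}$, $\mathbb{F}$ force identical floors to stack vertically. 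The specifically three-dimensional savings come from allowing links to be misaligned with the floors (so one tooth shape suffices) and from replacing the rotated duplicate tiles of the planar constructions by vertical stacking. To turn your outline into a proof you would need either to reproduce a construction of this kind or to exhibit your own six shapes and carry out the rigidity analysis in full.
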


Theorem \ref{thm_main} will be proved by reduction from Wang's domino problem. We follow the general reduction framework introduced by Ollinger \cite{o09} and subsequently improved by Yang and Zhang \cite{yang23,yang23b,yz24} in studying the $k$-polyomino tiling problems for the plane. To deal with the translational tiling problem, Ollinger's framework requires several rotational duplicate copies of otherwise the same tile. We introduce novel techniques that make use of the $3$-dimensional space to get rid of rotational copies of the same tile in proving Theorem \ref{thm_main}.

The rest of the paper is organized as follows. Section \ref{sec_block} introduces several special polycubes as the building blocks in proving the main result. Section \ref{sec_proof} gives the proof of Theorem \ref{thm_main}. Section \ref{sec_conclude} concludes with a few remarks on future work.

\section{Building Blocks}\label{sec_block}
A \textit{polycube} is a connected union of a finite number of unit cubes putting together face to face. Polycubes are the $3$-dimensional counterpart of polyominoes. A polycube can be depicted by a \textit{layer diagram} in which each horizontal layer of unit cubes is shown as $2$-dimensional layout, from the bottom layer to the top layer (see Figure \ref{fig_polycube} for an example). Layer diagrams are used to describe polycubes which will serve as building blocks of larger polycubes.

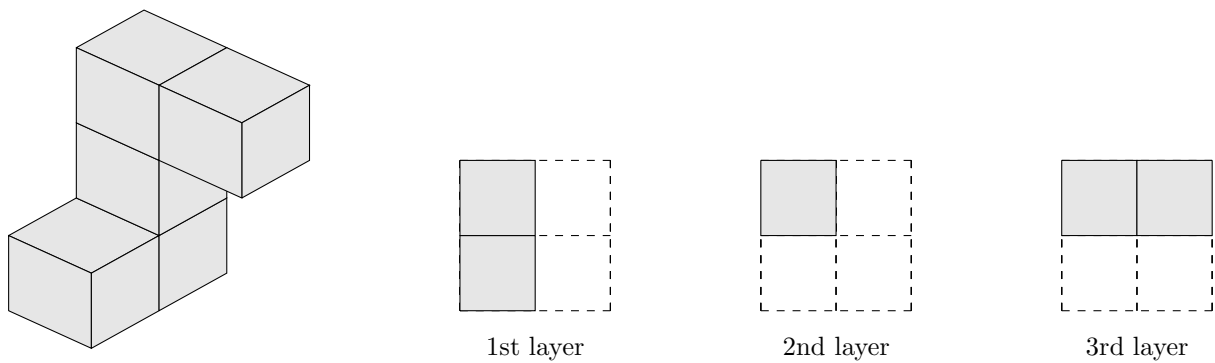
\begin{figure}[H]
\begin{center}
\begin{tikzpicture}[scale=0.5]

\foreach \x in {1.8}
\foreach \y in {7}
{
\draw [ fill=gray!20] (\x,\y)--(\x+2.2,\y-1)--(\x+4,\y)--(\x+1.8,\y+1)--(\x,\y);
}

\foreach \x in {0}
\foreach \y in {2}
{
\draw [ fill=gray!20] (\x,\y)--(\x+2.2,\y-1)--(\x+4,\y)--(\x+1.8,\y+1)--(\x,\y);
}

\foreach \x in {4}
\foreach \y in {6}
{
\draw [ fill=gray!20] (\x,\y)--(\x+2.2,\y-1)--(\x+4,\y)--(\x+1.8,\y+1)--(\x,\y);
}

\foreach \x in {0}
\foreach \y in {0}
{
\draw [ fill=gray!20] (\x,\y)--(\x+2.2,\y-1)--(\x+2.2,\y+1)--(\x,\y+2)--(\x,\y);
}

\foreach \x in {1.8}
\foreach \y in {3,5}
{ 
\draw [ fill=gray!20] (\x,\y)--(\x+2.2,\y-1)--(\x+2.2,\y+1)--(\x,\y+2)--(\x,\y);
}

\foreach \x in {2}
\foreach \y in {-1}
{ 
\draw [ fill=gray!20] (\x+0.2,\y)--(\x+2,\y+1)
--(\x+2,\y+3)--(\x+0.2,\y+2)--(\x+0.2,\y);
}

\foreach \x in {4}
\foreach \y in {0,2}
{ 
\draw [ fill=gray!20] (\x,\y)--(\x+1.8,\y+1)
--(\x+1.8,\y+3)--(\x,\y+2)--(\x,\y);
}
\foreach \x in {6.2}
\foreach \y in {3}
{ 
\draw [ fill=gray!20] (\x,\y)--(\x+1.8,\y+1)
--(\x+1.8,\y+3)--(\x,\y+2)--(\x,\y);
}

\foreach \x in {4}
\foreach \y in {4}
{
\draw [ fill=gray!20] (\x,\y)--(\x+2.2,\y-1)--(\x+2.2,\y+1)--(\x,\y+2)--(\x,\y);
}

\foreach \x in {12,20,28}
\foreach \y in {0,2,4} 
{ 
\draw [dashed] (\x+0,0+\y)--(\x+4,0+\y);
\draw [dashed] (\x,0)--(\x,4);
\draw [dashed] (\x+4,0)--(\x+4,4);
\draw [dashed] (\x+2,0)--(\x+2,4);
}

\draw [fill=gray!20] (12,0)--(14,0)--(14,2)--(12,2)--(12,0);
\draw [fill=gray!20] (12,4)--(14,4)--(14,2)--(12,2)--(12,4);

\draw [fill=gray!20] (20,4)--(22,4)--(22,2)--(20,2)--(20,4);

\draw [fill=gray!20] (28,4)--(30,4)--(30,2)--(28,2)--(28,4);
\draw [fill=gray!20] (32,4)--(30,4)--(30,2)--(32,2)--(32,4);

\node at (14,-1) {$1$st layer};  \node at (22,-1) {$2$nd layer};  \node at (30,-1) {$3$rd layer}; 

\end{tikzpicture}
\end{center}
\caption{A polycube and its layer-by-layer diagram.}\label{fig_polycube}
\end{figure}


A \textit{functional cube} is a $8\times 8 \times 8$ polycube. A \textit{half functional cube} is a $8\times 8\times 4$ polycube with two horizontal sides of length $8$ and a height of $4$. We will construct a set of $6$ polycubes to simulate any given set of Wang tiles in the next section. The basic building blocks of our set of $6$ polycubes are functional cubes and half functional cubes with bumps or dents.


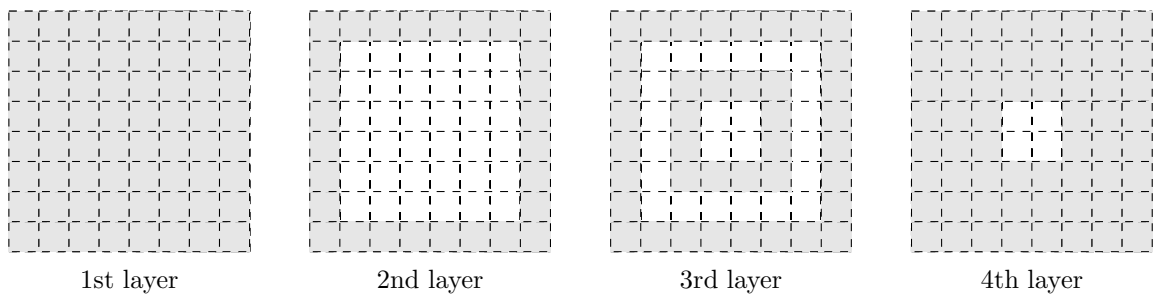
\begin{figure}[H]
\begin{center}
\begin{tikzpicture}[scale=0.4]

\foreach \x in {0,10,20,30}
\foreach \y in {0,...,8} 
{ 
\draw [dashed] (\x+0,0+\y)--(\x+8,0+\y);
\draw [dashed] (\x,0)--(\x,8);
\draw [dashed] (\x+8,0)--(\x+8,8);
\draw [dashed] (\x+7,0)--(\x+7,8);
\draw [dashed] (\x+6,0)--(\x+6,8);
\draw [dashed] (\x+5,0)--(\x+5,8);
\draw [dashed] (\x+4,0)--(\x+4,8);
\draw [dashed] (\x+3,0)--(\x+3,8);
\draw [dashed] (\x+2,0)--(\x+2,8);
\draw [dashed] (\x+1,0)--(\x+1,8);
}

\foreach \x in {0,...,7}
\foreach \y in {0,...,7} 
{

\draw [dashed,fill=gray!20] (\x+0,1+\y)--(\x+1,1+\y)--(\x+1,0+\y)--(\x+0,0+\y)--(\x+0,1+\y);
}

\foreach \x in {10,17}
\foreach \y in {0,...,7} 
{
\draw [dashed,fill=gray!20] (\x+0,1+\y)--(\x+1,1+\y)--(\x+1,0+\y)--(\x+0,0+\y)--(\x+0,1+\y);
}
\foreach \x in {11,...,16}
\foreach \y in {0,7} 
{
\draw [dashed,fill=gray!20] (\x+0,1+\y)--(\x+1,1+\y)--(\x+1,0+\y)--(\x+0,0+\y)--(\x+0,1+\y);
}

\foreach \x in {20,27}
\foreach \y in {0,...,7} 
{
\draw [dashed,fill=gray!20] (\x+0,1+\y)--(\x+1,1+\y)--(\x+1,0+\y)--(\x+0,0+\y)--(\x+0,1+\y);
}
\foreach \x in {21,...,26}
\foreach \y in {0,7} 
{
\draw [dashed,fill=gray!20] (\x+0,1+\y)--(\x+1,1+\y)--(\x+1,0+\y)--(\x+0,0+\y)--(\x+0,1+\y);
}
\foreach \x in {22,...,25}
\foreach \y in {2,5} 
{
\draw [dashed,fill=gray!20] (\x+0,1+\y)--(\x+1,1+\y)--(\x+1,0+\y)--(\x+0,0+\y)--(\x+0,1+\y);
}
\foreach \x in {22,25}
\foreach \y in {3,4} 
{
\draw [dashed,fill=gray!20] (\x+0,1+\y)--(\x+1,1+\y)--(\x+1,0+\y)--(\x+0,0+\y)--(\x+0,1+\y);
}

\foreach \x in {30,...,37}
\foreach \y in {0,1,2,5,6,7} 
{
\draw [dashed,fill=gray!20] (\x+0,1+\y)--(\x+1,1+\y)--(\x+1,0+\y)--(\x+0,0+\y)--(\x+0,1+\y);
}
\foreach \x in {30,31,32,35,36,37}
\foreach \y in {3,4} 
{
\draw [dashed,fill=gray!20] (\x+0,1+\y)--(\x+1,1+\y)--(\x+1,0+\y)--(\x+0,0+\y)--(\x+0,1+\y);
}

\node at (4,-1) {$1$st layer};  \node at (14,-1) {$2$nd layer};  \node at (24,-1) {$3$rd layer};  \node at (34,-1) {$4$th layer};

\end{tikzpicture}
\end{center}
\caption{Layer diagram of $c$, a half functional cube with a dent.}\label{fig_half_c}
\end{figure}

Let $c$ denote the half functional cube with a dent from the top as illustrated in Figure \ref{fig_half_c}, and $C$ denote the half functional cube with a bump from the bottom as illustrated in Figure \ref{fig_half_C}. The dent of $c$ matches the bump of $C$ exactly so that $C$ can be put above $c$ to form a complete $8\times 8\times 8$ functional cube without any gaps or overlaps.  The building blocks $c$ and $C$ will be used to encode the colors of Wang tiles, so they will be referred to as \textit{color} building blocks. In addition to the pair of color building blocks, three more pairs of half functional cubes with either a dent or a bump, $a$ and $A$, $b$ and $B$, and $o$ and $O$, will be used as building blocks in the construction of the set of $6$ polycubes in the next section. Each lowercase letter denotes a half functional cube with a dent, and each uppercase letter denotes a half functional cube with a bump.


\begin{figure}[H]
\begin{center}
\begin{tikzpicture}[scale=0.4]

\foreach \x in {0,10,20,30}
\foreach \y in {0,...,8} 
{ 
\draw [dashed] (\x+0,0+\y)--(\x+8,0+\y);
\draw [dashed] (\x,0)--(\x,8);
\draw [dashed] (\x+8,0)--(\x+8,8);
\draw [dashed] (\x+7,0)--(\x+7,8);
\draw [dashed] (\x+6,0)--(\x+6,8);
\draw [dashed] (\x+5,0)--(\x+5,8);
\draw [dashed] (\x+4,0)--(\x+4,8);
\draw [dashed] (\x+3,0)--(\x+3,8);
\draw [dashed] (\x+2,0)--(\x+2,8);
\draw [dashed] (\x+1,0)--(\x+1,8);
}

\foreach \x in {30,...,37}
\foreach \y in {0,...,7} 
{
\draw [dashed,fill=gray!20] (\x+0,1+\y)--(\x+1,1+\y)--(\x+1,0+\y)--(\x+0,0+\y)--(\x+0,1+\y);
}

\foreach \x in {11,16}
\foreach \y in {1,...,6} 
{
\draw [dashed,fill=gray!20] (\x+0,1+\y)--(\x+1,1+\y)--(\x+1,0+\y)--(\x+0,0+\y)--(\x+0,1+\y);
}
\foreach \x in {12,...,15}
\foreach \y in {1,6} 
{
\draw [dashed,fill=gray!20] (\x+0,1+\y)--(\x+1,1+\y)--(\x+1,0+\y)--(\x+0,0+\y)--(\x+0,1+\y);
}
\foreach \x in {13,14}
\foreach \y in {3,4} 
{
\draw [dashed,fill=gray!20] (\x+0,1+\y)--(\x+1,1+\y)--(\x+1,0+\y)--(\x+0,0+\y)--(\x+0,1+\y);
}

\foreach \x in {23,24}
\foreach \y in {3,4} 
{
\draw [dashed,fill=gray!20] (\x+0,1+\y)--(\x+1,1+\y)--(\x+1,0+\y)--(\x+0,0+\y)--(\x+0,1+\y);
}

\foreach \x in {1,...,6}
\foreach \y in {1,...,6} 
{
\draw [dashed,fill=gray!20] (\x+0,1+\y)--(\x+1,1+\y)--(\x+1,0+\y)--(\x+0,0+\y)--(\x+0,1+\y);
}

\node at (4,-1) {$1$st layer};  \node at (14,-1) {$2$nd layer};  \node at (24,-1) {$3$rd layer};  \node at (34,-1) {$4$th to 7th layers};

\end{tikzpicture}
\end{center}
\caption{Layer diagram of $C$, a half functional cube with a bump.}\label{fig_half_C}
\end{figure}
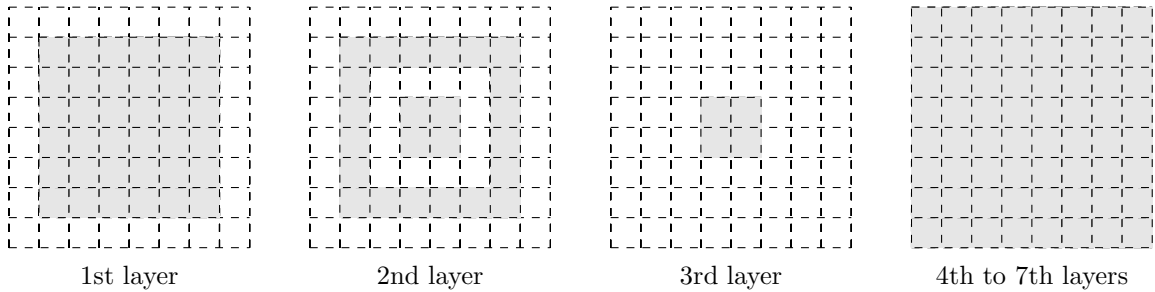


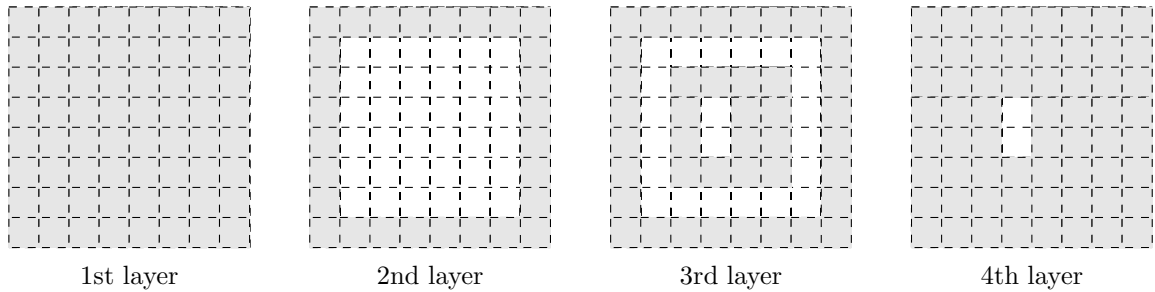
\begin{figure}[H]
\begin{center}
\begin{tikzpicture}[scale=0.4]

\foreach \x in {0,10,20,30}
\foreach \y in {0,...,8} 
{ 
\draw [dashed] (\x+0,0+\y)--(\x+8,0+\y);
\draw [dashed] (\x,0)--(\x,8);
\draw [dashed] (\x+8,0)--(\x+8,8);
\draw [dashed] (\x+7,0)--(\x+7,8);
\draw [dashed] (\x+6,0)--(\x+6,8);
\draw [dashed] (\x+5,0)--(\x+5,8);
\draw [dashed] (\x+4,0)--(\x+4,8);
\draw [dashed] (\x+3,0)--(\x+3,8);
\draw [dashed] (\x+2,0)--(\x+2,8);
\draw [dashed] (\x+1,0)--(\x+1,8);
}

\foreach \x in {0,...,7}
\foreach \y in {0,...,7} 
{

\draw [dashed,fill=gray!20] (\x+0,1+\y)--(\x+1,1+\y)--(\x+1,0+\y)--(\x+0,0+\y)--(\x+0,1+\y);
}

\foreach \x in {10,17}
\foreach \y in {0,...,7} 
{
\draw [dashed,fill=gray!20] (\x+0,1+\y)--(\x+1,1+\y)--(\x+1,0+\y)--(\x+0,0+\y)--(\x+0,1+\y);
}
\foreach \x in {11,...,16}
\foreach \y in {0,7} 
{
\draw [dashed,fill=gray!20] (\x+0,1+\y)--(\x+1,1+\y)--(\x+1,0+\y)--(\x+0,0+\y)--(\x+0,1+\y);
}

\foreach \x in {20,27}
\foreach \y in {0,...,7} 
{
\draw [dashed,fill=gray!20] (\x+0,1+\y)--(\x+1,1+\y)--(\x+1,0+\y)--(\x+0,0+\y)--(\x+0,1+\y);
}
\foreach \x in {21,...,26}
\foreach \y in {0,7} 
{
\draw [dashed,fill=gray!20] (\x+0,1+\y)--(\x+1,1+\y)--(\x+1,0+\y)--(\x+0,0+\y)--(\x+0,1+\y);
}
\foreach \x in {22,...,25}
\foreach \y in {2,5} 
{
\draw [dashed,fill=gray!20] (\x+0,1+\y)--(\x+1,1+\y)--(\x+1,0+\y)--(\x+0,0+\y)--(\x+0,1+\y);
}
\foreach \x in {22,24,25}
\foreach \y in {3,4} 
{
\draw [dashed,fill=gray!20] (\x+0,1+\y)--(\x+1,1+\y)--(\x+1,0+\y)--(\x+0,0+\y)--(\x+0,1+\y);
}

\foreach \x in {30,...,37}
\foreach \y in {0,1,2,5,6,7} 
{
\draw [dashed,fill=gray!20] (\x+0,1+\y)--(\x+1,1+\y)--(\x+1,0+\y)--(\x+0,0+\y)--(\x+0,1+\y);
}
\foreach \x in {30,31,32,34,35,36,37}
\foreach \y in {3,4} 
{
\draw [dashed,fill=gray!20] (\x+0,1+\y)--(\x+1,1+\y)--(\x+1,0+\y)--(\x+0,0+\y)--(\x+0,1+\y);
}

\node at (4,-1) {$1$st layer};  \node at (14,-1) {$2$nd layer};  \node at (24,-1) {$3$rd layer};  \node at (34,-1) {$4$th layer};  

\end{tikzpicture}
\end{center}
\caption{Layer diagram of $a$, a half functional cube with a dent.}\label{fig_half_a}
\end{figure}

Building blocks $a$ and $o$ are illustrated in Figure \ref{fig_half_a} and Figure \ref{fig_half_o}, respectively. Obviously, the building block $o$ can be obtained from $a$ by $90$ degree clockwise rotation about a vertical axis. Let building block $b$ be the polycube obtained from $o$ by performing yet another $90$ degree clockwise rotation about a vertical axis. The building blocks $A$, $B$ and $O$ are half functional cubes with a bump from the top that matches $a$, $b$ and $o$, respectively.


\begin{figure}[H]
\begin{center}
\begin{tikzpicture}[scale=0.4]

\foreach \x in {0,10,20,30}
\foreach \y in {0,...,8} 
{ 
\draw [dashed] (\x+0,0+\y)--(\x+8,0+\y);
\draw [dashed] (\x,0)--(\x,8);
\draw [dashed] (\x+8,0)--(\x+8,8);
\draw [dashed] (\x+7,0)--(\x+7,8);
\draw [dashed] (\x+6,0)--(\x+6,8);
\draw [dashed] (\x+5,0)--(\x+5,8);
\draw [dashed] (\x+4,0)--(\x+4,8);
\draw [dashed] (\x+3,0)--(\x+3,8);
\draw [dashed] (\x+2,0)--(\x+2,8);
\draw [dashed] (\x+1,0)--(\x+1,8);
}

\foreach \x in {0,...,7}
\foreach \y in {0,...,7} 
{

\draw [dashed,fill=gray!20] (\x+0,1+\y)--(\x+1,1+\y)--(\x+1,0+\y)--(\x+0,0+\y)--(\x+0,1+\y);
}

\foreach \x in {10,17}
\foreach \y in {0,...,7} 
{
\draw [dashed,fill=gray!20] (\x+0,1+\y)--(\x+1,1+\y)--(\x+1,0+\y)--(\x+0,0+\y)--(\x+0,1+\y);
}
\foreach \x in {11,...,16}
\foreach \y in {0,7} 
{
\draw [dashed,fill=gray!20] (\x+0,1+\y)--(\x+1,1+\y)--(\x+1,0+\y)--(\x+0,0+\y)--(\x+0,1+\y);
}

\foreach \x in {20,27}
\foreach \y in {0,...,7} 
{
\draw [dashed,fill=gray!20] (\x+0,1+\y)--(\x+1,1+\y)--(\x+1,0+\y)--(\x+0,0+\y)--(\x+0,1+\y);
}
\foreach \x in {21,...,26}
\foreach \y in {0,7} 
{
\draw [dashed,fill=gray!20] (\x+0,1+\y)--(\x+1,1+\y)--(\x+1,0+\y)--(\x+0,0+\y)--(\x+0,1+\y);
}
\foreach \x in {22,...,25}
\foreach \y in {2,3,5} 
{
\draw [dashed,fill=gray!20] (\x+0,1+\y)--(\x+1,1+\y)--(\x+1,0+\y)--(\x+0,0+\y)--(\x+0,1+\y);
}
\foreach \x in {22,25}
\foreach \y in {4} 
{
\draw [dashed,fill=gray!20] (\x+0,1+\y)--(\x+1,1+\y)--(\x+1,0+\y)--(\x+0,0+\y)--(\x+0,1+\y);
}

\foreach \x in {30,...,37}
\foreach \y in {0,1,2,3,5,6,7} 
{
\draw [dashed,fill=gray!20] (\x+0,1+\y)--(\x+1,1+\y)--(\x+1,0+\y)--(\x+0,0+\y)--(\x+0,1+\y);
}
\foreach \x in {30,31,32,35,36,37}
\foreach \y in {4} 
{
\draw [dashed,fill=gray!20] (\x+0,1+\y)--(\x+1,1+\y)--(\x+1,0+\y)--(\x+0,0+\y)--(\x+0,1+\y);
}

\node at (4,-1) {$1$st layer};  \node at (14,-1) {$2$nd layer};  \node at (24,-1) {$3$rd layer};  \node at (34,-1) {$4$th layer};  

\end{tikzpicture}
\end{center}
\caption{Layer diagram of $o$, a half functional cube with a dent.}\label{fig_half_o}
\end{figure}
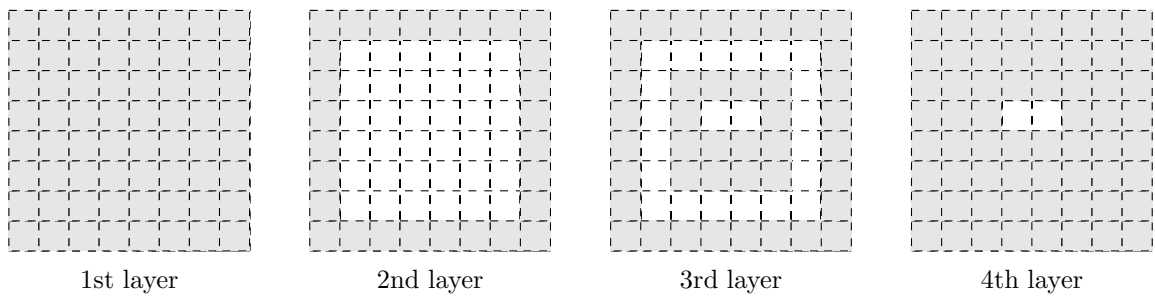

There is a second kind of building block, which is a functional cube with both a dent from the top and a bump of the same shape attached to the bottom. One such building block $\mathbb{M}$ is illustrated in Figure \ref{fig_M}. This type of building block is denoted by uppercase letters in blackboard bold font. There are two more building blocks of this kind, $\mathbb{J}$ and $\mathbb{F}$, which are obtained from $\mathbb{M}$ by a $90$ degree and a $180$ degree clockwise rotation about a vertical axis, respectively.



\begin{figure}[H]
\begin{center}
\begin{tikzpicture}[scale=0.4]

\foreach \x in {0,10,20,30}
\foreach \y in {0,...,8} 
{ 
\draw [dashed] (\x+0,0+\y)--(\x+8,0+\y);
\draw [dashed] (\x,0)--(\x,8);
\draw [dashed] (\x+8,0)--(\x+8,8);
\draw [dashed] (\x+7,0)--(\x+7,8);
\draw [dashed] (\x+6,0)--(\x+6,8);
\draw [dashed] (\x+5,0)--(\x+5,8);
\draw [dashed] (\x+4,0)--(\x+4,8);
\draw [dashed] (\x+3,0)--(\x+3,8);
\draw [dashed] (\x+2,0)--(\x+2,8);
\draw [dashed] (\x+1,0)--(\x+1,8);
}

\foreach \x in {30,...,37}
\foreach \y in {0,...,7} 
{
\draw [dashed,fill=gray!20] (\x+0,1+\y)--(\x+1,1+\y)--(\x+1,0+\y)--(\x+0,0+\y)--(\x+0,1+\y);
}

\foreach \x in {11,16}
\foreach \y in {1,...,6} 
{
\draw [dashed,fill=gray!20] (\x+0,1+\y)--(\x+1,1+\y)--(\x+1,0+\y)--(\x+0,0+\y)--(\x+0,1+\y);
}
\foreach \x in {12,...,15}
\foreach \y in {1,6} 
{
\draw [dashed,fill=gray!20] (\x+0,1+\y)--(\x+1,1+\y)--(\x+1,0+\y)--(\x+0,0+\y)--(\x+0,1+\y);
}
\foreach \x in {13}
\foreach \y in {3} 
{
\draw [dashed,fill=gray!20] (\x+0,1+\y)--(\x+1,1+\y)--(\x+1,0+\y)--(\x+0,0+\y)--(\x+0,1+\y);
}

\foreach \x in {23}
\foreach \y in {3} 
{
\draw [dashed,fill=gray!20] (\x+0,1+\y)--(\x+1,1+\y)--(\x+1,0+\y)--(\x+0,0+\y)--(\x+0,1+\y);
}

\foreach \x in {1,...,6}
\foreach \y in {1,...,6} 
{
\draw [dashed,fill=gray!20] (\x+0,1+\y)--(\x+1,1+\y)--(\x+1,0+\y)--(\x+0,0+\y)--(\x+0,1+\y);
}

\node at (4,-1) {$1$st layer};  \node at (14,-1) {$2$nd layer};  \node at (24,-1) {$3$rd layer};  \node at (34,-1) {$4$th to 8th layers};

\node at (4,-11) {$9$th layer};  \node at (14,-11) {$10$th layer};  \node at (24,-11) {$11$th layer};


\foreach \x in {0,10,20}
\foreach \y in {-10,...,-2} 
{ 
\draw [dashed] (\x+0,0+\y)--(\x+8,0+\y);
\draw [dashed] (\x,-10)--(\x,-2);
\draw [dashed] (\x+8,-10)--(\x+8,-2);
\draw [dashed] (\x+7,-10)--(\x+7,-2);
\draw [dashed] (\x+6,-10)--(\x+6,-2);
\draw [dashed] (\x+5,-10)--(\x+5,-2);
\draw [dashed] (\x+4,-10)--(\x+4,-2);
\draw [dashed] (\x+3,-10)--(\x+3,-2);
\draw [dashed] (\x+2,-10)--(\x+2,-2);
\draw [dashed] (\x+1,-10)--(\x+1,-2);
}

\foreach \x in {0,...,7}
\foreach \y in {-10,-3} 
{
\draw [dashed,fill=gray!20] (\x+0,1+\y)--(\x+1,1+\y)--(\x+1,0+\y)--(\x+0,0+\y)--(\x+0,1+\y);
}

\foreach \x in {0,7}
\foreach \y in {-9,...,-4} 
{
\draw [dashed,fill=gray!20] (\x+0,1+\y)--(\x+1,1+\y)--(\x+1,0+\y)--(\x+0,0+\y)--(\x+0,1+\y);
}

\foreach \x in {10,...,17}
\foreach \y in {-10,-3} 
{
\draw [dashed,fill=gray!20] (\x+0,1+\y)--(\x+1,1+\y)--(\x+1,0+\y)--(\x+0,0+\y)--(\x+0,1+\y);
}

\foreach \x in {10,17}
\foreach \y in {-9,...,-4} 
{
\draw [dashed,fill=gray!20] (\x+0,1+\y)--(\x+1,1+\y)--(\x+1,0+\y)--(\x+0,0+\y)--(\x+0,1+\y);
}

\foreach \x in {12,...,15}
\foreach \y in {-8,-6,-5} 
{
\draw [dashed,fill=gray!20] (\x+0,1+\y)--(\x+1,1+\y)--(\x+1,0+\y)--(\x+0,0+\y)--(\x+0,1+\y);
}

\foreach \x in {12,14,15}
\foreach \y in {-7} 
{
\draw [dashed,fill=gray!20] (\x+0,1+\y)--(\x+1,1+\y)--(\x+1,0+\y)--(\x+0,0+\y)--(\x+0,1+\y);
}

\foreach \x in {20,...,27}
\foreach \y in {-10,-9,-8,-6,-5,-4,-3} 
{
\draw [dashed,fill=gray!20] (\x+0,1+\y)--(\x+1,1+\y)--(\x+1,0+\y)--(\x+0,0+\y)--(\x+0,1+\y);
}

\foreach \x in {20,21,22,24,25,26,27}
\foreach \y in {-7} 
{
\draw [dashed,fill=gray!20] (\x+0,1+\y)--(\x+1,1+\y)--(\x+1,0+\y)--(\x+0,0+\y)--(\x+0,1+\y);
}

\end{tikzpicture}
\end{center}
\caption{Layer diagram of $\mathbb{M}$, a functional cube with both a dent and a bump.}\label{fig_M}
\end{figure}
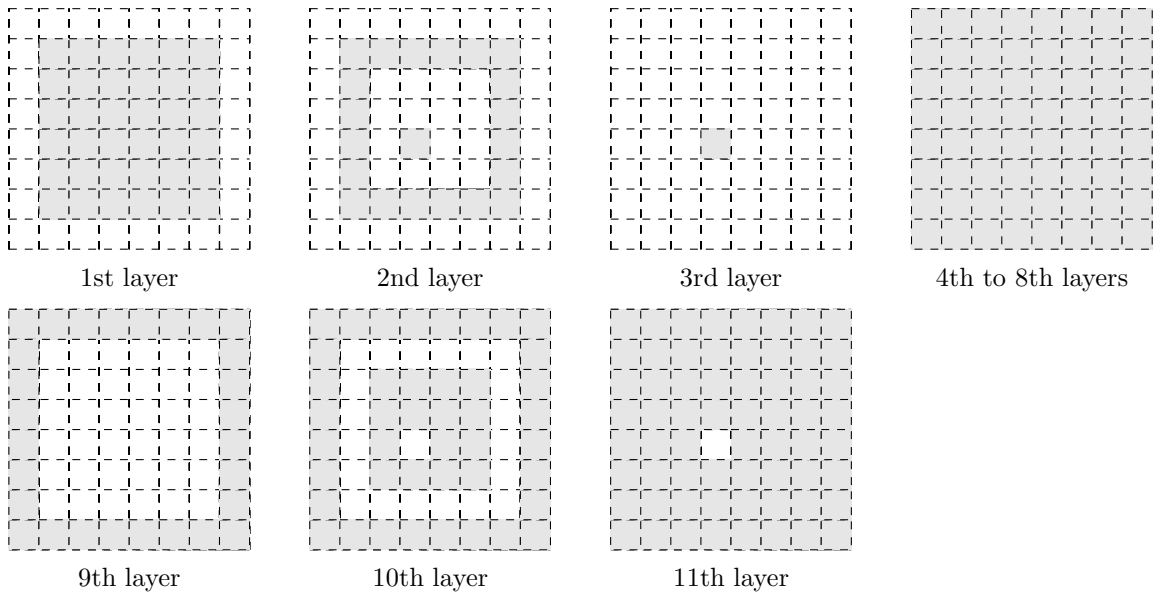

\section{Proof of Theorem \ref{thm_main}}\label{sec_proof}

With the building blocks introduced in the previous section, we are ready to prove our main result.

\begin{proof}[Proof of Theorem \ref{thm_main}] We prove by reduction from Wang's domino problem. Given a set $W$ of Wang tiles, we construct a set $P$ of $6$ polycubes such that there exists a translational tiling of the plane with $W$ if and only if there exists a translational tiling of the $3$-dimensional space with $P$. As mentioned in the first section, we adopt the general framework of Ollinger in his proof of the undecidability of $11$-polyomino tiling problem \cite{o09}. In order to show $6$ polycubes (compared to $11$ in Ollinger's original construction) are sufficient to simulate any set of Wang tiles, we follow more closely to the construction of $8$ polyominoes developed by Yang and Zhang in \cite{yz24}, and introduced novel techniques to further decrease the total number of tiles.

\subsection{The Set of $6$ Polycubes}

We take the set of $3$ Wang tiles illustrated in Figure \ref{fig_wang_set} as an example to describe the construction of the polycubes, and the method can be applied to any set of Wang tiles without any difficulties. To illustrate the construction of the set of $6$ polycubes, we have the following convention in Figure \ref{fig_meat}, Figure \ref{fig_jaw}, Figure \ref{fig_filler} and Figure \ref{fig_link}. These figures are $2$-dimensional top view of the polycubes. Each gray square without a label in the figures represents a normal $8\times 8\times 8$ functional cube. Squares with a label are building blocks that have been introduced in the previous section. So roughly speaking, each polycube in our construction is a thick version of a polyomino (ignoring the dents and bumps).

The first tile is the \textit{meat} (Figure \ref{fig_meat}) which encodes all the Wang tiles in just one polycube. The color building blocks $c$ can be attached to either the upper half or lower half of the main part of the meat polycube, and they are distinguished by adding superscripts or subscripts. A color building block $c^*$ means it is attached to the upper half, and $c_*$ means it is attached to the lower half. See Figure \ref{fig_attach} for a side view of the two ways of the color building blocks $c$ being attached to the main part of a meat polycube. The building blocks $A$, $B$ and $O$ are always attached to the upper half of the main part of the polycube. There is a building block $\mathbb{M}$ at the northwest corner of the meat.


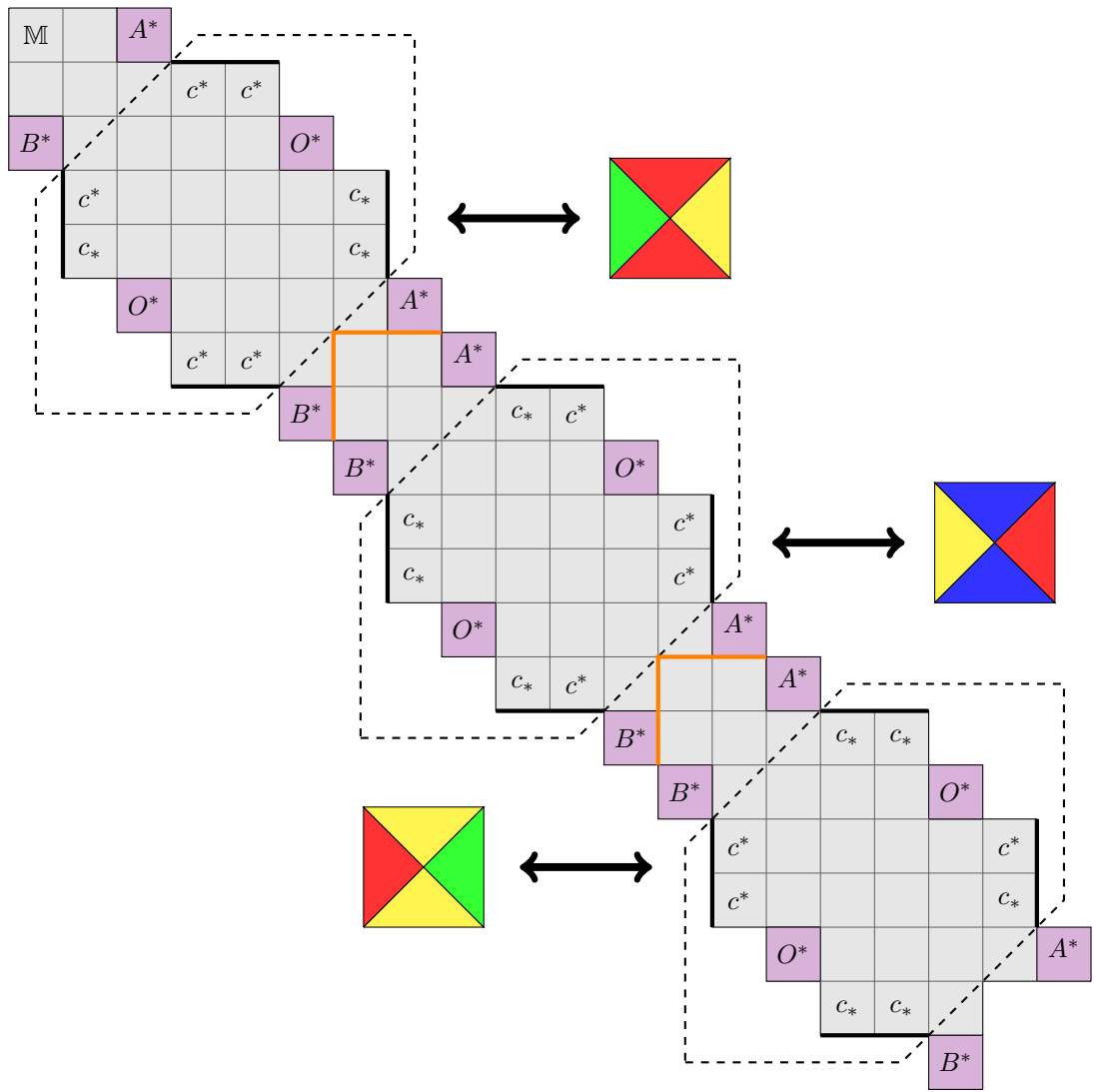
\begin{figure}[H]
\begin{center}
\begin{tikzpicture}[scale=0.08]

\draw [fill=gray!20] (0,63)--(18,63)
--(18,54)
--(45,54)--(45,36)--(63,36)
--(63,18)
--(63,9)--(72,9)
--(72,0)
--(99,0)--(99,-18)--(117,-18)
--(117,-45)
--(126,-45)--(126,-54)
--(153,-54)
--(153,-72)--(171,-72)
--(171,-99)--(162,-99)--(162,-108)
--(135,-108)--(135,-90)--(117,-90)
--(117,-63)--(108,-63)--(108,-54)
--(81,-54)--(81,-36)--(63,-36)
--(63,-9)
--(54,-9)--(54,0)
--(27,0)
--(27,18)--(9,18)
--(9,45)--(0,45)--(0,63);

\node at (4.5, 58.5) {$\mathbb{M}$}; 
\node at (31.5,49.5) {$c^*$}; \node at (40.5,49.5) {$c^*$}; 
\node at (31.5,4.5) {$c^*$}; \node at (40.5,4.5) {$c^*$}; 
\node at (58.5,31.5) {$c_*$}; \node at (58.5,22.5) {$c_*$}; 
\node at (13.5,31.5) {$c^*$}; \node at (13.5,22.5) {$c_*$}; 

\node at (85.5,-4.5) {$c_*$}; \node at (94.5,-4.5) {$c^*$}; 
\node at (85.5,-49.5) {$c_*$}; \node at (94.5,-49.5) {$c^*$}; 
\node at (67.5,-22.5) {$c_*$}; \node at (67.5,-31.5) {$c_*$}; 
\node at (112.5,-22.5) {$c^*$}; \node at (112.5,-31.5) {$c^*$}; 

\node at (139.5,-58.5) {$c_*$}; \node at (148.5,-58.5) {$c_*$}; 
\node at (139.5,-103.5) {$c_*$}; \node at (148.5,-103.5) {$c_*$}; 
\node at (121.5,-76.5) {$c^*$}; \node at (121.5,-85.5) {$c^*$}; 
\node at (166.5,-76.5) {$c^*$}; \node at (166.5,-85.5) {$c_*$}; 


\foreach \x in {0,45,54,99,108,153}
{
\draw [fill=violet!30] (\x+27,54-\x)--(\x+27,63-\x)--(\x+18,63-\x)--(\x+18,54-\x)--(\x+27,54-\x);
\draw [fill=violet!30] (\x+9,36-\x)--(\x+9,45-\x)--(\x+0,45-\x)--(\x+0,36-\x)--(\x+9,36-\x);
\node at (\x+22.5,59.5-\x) {$A^*$};
\node at (\x+4.5,40.5-\x) {$B^*$};
}

\foreach \x in {0,54,108}
{
\draw [fill=violet!30] (\x+54,36-\x)--(\x+54,45-\x)--(\x+45,45-\x)--(\x+45,36-\x)--(\x+54,36-\x);
\node at (\x+49.5,40.5-\x) {$O^*$};

\draw [fill=violet!30] (\x+27,9-\x)--(\x+27,18-\x)--(\x+18,18-\x)--(\x+18,9-\x)--(\x+27,9-\x);
\node at (\x+22.5,13.5-\x) {$O^*$};
}

\draw [color=black!60] (0,54)--(18,54)--(18,18);
\draw [color=black!60] (9,63)--(9,45)--(45,45);
\draw [color=black!60] (9,36)--(45,36)--(45,0);
\draw [color=black!60] (9,27)--(63,27);
\draw [color=black!60] (27,54)--(27,18)--(63,18);
\draw [color=black!60] (36,54)--(36,0);
\draw [color=black!60] (27,9)--(63,9)--(63,-9)--(99,-9);
\draw [color=black!60] (54,36)--(54,0)--(72,0)--(72,-36);
\draw [color=black!60] (81,0)--(81,-36)--(117,-36);
\draw [color=black!60] (63,-18)--(99,-18)--(99,-54);
\draw [color=black!60] (63,-27)--(117,-27);
\draw [color=black!60] (90,0)--(90,-54);
\draw [color=black!60] (81,-45)--(117,-45)--(117,-63)--(153,-63);
\draw [color=black!60] (108,-18)--(108,-54)--(126,-54)--(126,-90);
\draw [color=black!60] (117,-72)--(153,-72)--(153,-108);
\draw [color=black!60] (117,-81)--(171,-81);
\draw [color=black!60] (135,-54)--(135,-90)--(171,-90);
\draw [color=black!60] (144,-54)--(144,-108);
\draw [color=black!60] (135,-99)--(162,-99)--(162,-72);

\draw [dashed, thick] (4.5,-4.5)--(40.5,-4.5)--(67.5,22.5)--(67.5,58.5)--(31.5,58.5)--(4.5,31.5)--(4.5,-4.5);

\foreach \x in {100}
\foreach \y in {18}
{
\draw [fill=green!80] (\x+0,0+\y)--(\x+10,10+\y)--(\x+0,20+\y)--(\x+0,0+\y);
\draw [fill=red!80] (\x+0,0+\y)--(\x+20,0+\y)--(\x+0,20+\y)--(\x+20,20+\y)--(\x+0,0+\y);
\draw [fill=yellow!80] (\x+10,10+\y)--(\x+20,20+\y)--(\x+20,0+\y)--(\x+10,10+\y);
}
\draw  [<->, line width=3,  black] (73,28)--(95,28);

\draw [dashed, thick] (54+4.5,-4.5-54)--(54+40.5,-4.5-54)--(54+67.5,22.5-54)--(54+67.5,58.5-54)--(54+31.5,58.5-54)--(54+4.5,31.5-54)--(54+4.5,-4.5-54);

\foreach \x in {154}
\foreach \y in {-36}
{
\draw [fill=yellow!80] (\x+0,0+\y)--(\x+10,10+\y)--(\x+0,20+\y)--(\x+0,0+\y);
\draw [fill=blue!80] (\x+0,0+\y)--(\x+20,0+\y)--(\x+0,20+\y)--(\x+20,20+\y)--(\x+0,0+\y);
\draw [fill=red!80] (\x+10,10+\y)--(\x+20,20+\y)--(\x+20,0+\y)--(\x+10,10+\y);
}

\draw  [<->, line width=3,  black] (73+54,-26)--(95+54,-26);

\draw [dashed, thick] (108+4.5,-4.5-108)--(108+40.5,-4.5-108)--(108+67.5,22.5-108)--(108+67.5,58.5-108)--(108+31.5,58.5-108)--(108+4.5,31.5-108)--(108+4.5,-4.5-108);
\draw  [<->, line width=3,  black] (73+12,-80)--(95+12,-80);

\foreach \x in {59}
\foreach \y in {-90}
{
\draw [fill=red!80] (\x+0,0+\y)--(\x+10,10+\y)--(\x+0,20+\y)--(\x+0,0+\y);
\draw [fill=yellow!80] (\x+0,0+\y)--(\x+20,0+\y)--(\x+0,20+\y)--(\x+20,20+\y)--(\x+0,0+\y);
\draw [fill=green!80] (\x+10,10+\y)--(\x+20,20+\y)--(\x+20,0+\y)--(\x+10,10+\y);
}

\foreach \x in {0}
\foreach \y in {0}
{
\draw [ultra thick] (\x+27,\y)--(\x+45,\y); \draw [ultra thick] (\x+27,54+\y)--(\x+45,54+\y);
\draw [ultra thick] (\x+9,18+\y)--(\x+9,36+\y); \draw [ultra thick] (\x+63,18+\y)--(\x+63,36+\y); 

\draw [color=orange, ultra thick] (\x+54,-9+\y)--(\x+54,9+\y)--(\x+72,9+\y);  
}
\foreach \x in {54}
\foreach \y in {-54}
{
\draw [ultra thick] (\x+27,\y)--(\x+45,\y); \draw [ultra thick] (\x+27,54+\y)--(\x+45,54+\y);
\draw [ultra thick] (\x+9,18+\y)--(\x+9,36+\y); \draw [ultra thick] (\x+63,18+\y)--(\x+63,36+\y); 
\draw [color=orange, ultra thick] (\x+54,-9+\y)--(\x+54,9+\y)--(\x+72,9+\y);  
}\foreach \x in {108}
\foreach \y in {-108}
{
\draw [ultra thick] (\x+27,\y)--(\x+45,\y); \draw [ultra thick] (\x+27,54+\y)--(\x+45,54+\y);
\draw [ultra thick] (\x+9,18+\y)--(\x+9,36+\y); \draw [ultra thick] (\x+63,18+\y)--(\x+63,36+\y); 
}

\end{tikzpicture}
\end{center}
\caption{The meat.}\label{fig_meat}
\end{figure}

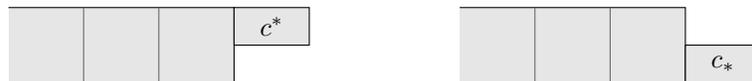
\begin{figure}[H]
\begin{center}
\begin{tikzpicture}

\draw [fill=gray!20] (0,0)--(3,0)--(3,1)--(0,1); \draw [fill=gray!20] (3,0.5)--(4,0.5)--(4,1)--(3,1)--(3,0.5);
\draw [fill=gray!20] (6+0,0)--(6+3,0)--(6+3,1)--(6+0,1); \draw [fill=gray!20] (6+3,0.5)--(6+4,0.5)--(6+4,0)--(6+3,0)--(6+3,0.5);

\foreach \x in {1,2,7,8}  
{ 
\draw [ color=black!60] (\x,0)--(\x,1); 
}

\node at (3.5,0.75) {$c^*$}; \node at (9.5,0.25) {$c_*$};

\end{tikzpicture}
\end{center}
\caption{Building blocks attached to the main part of a polycube (side view).}\label{fig_attach}
\end{figure}


The three Wang tiles of Figure \ref{fig_wang_set} are simulated in parts of the meat polycube enclosed by dashed lines in Figure \ref{fig_meat}. These parts will be referred to \textit{simulated Wang tiles}. The positions (upper half or lower half) that the building blocks $c$ are attached to the main part of the meat form a binary encoding system for the colors of the Wang tiles. In this example, $c^*c^*$, $c^*c_*$, $c_*c^*$ and $c_*c_*$ encode the colors red, green, blue and yellow, respectively. Given an arbitrary set of Wang tiles, the size of a simulated Wang tile in the meat polycube can increase if we need more binary bits to encode more different colors. In other words, the length of the thick black lines shown in Figure \ref{fig_meat} can be elongated to include more building blocks $c$. Note also that the thick orange lines in Figure \ref{fig_meat} divide the meat into $3$ \textit{segments}, which are almost identical except for the building block $\mathbb{M}$ and the positions of the color building blocks. In general, to simulate a set of $k$ Wang tiles, we construct a meat polycube with $k$ segments.

The top view of the second polycube, the \textit{jaw}, is illustrated at the top of Figure \ref{fig_jaw}. The jaw has two concave \textit{mouths}, one at the northwest corner and the other southeast. There are several building blocks $a$, $b$ and $o$ being attached inside the mouths of the jaw. Note also that there is a building block $o$ attached to the northeast corner and southwest corner of the jaw, respectively. All the building blocks $a$, $b$ and $o$ are attached to the lower half of the main part of the jaw. The concave shape of the mouths matches the shape of the meat so that a meat polycube can be put partially inside a mouth of the jaw polycube. But the mouth can grip at most all but one simulated a Wang tile from one side of the meat, always leaving some parts of the meat outside the mouth. In general, for a set of $k$ Wang tiles, we construct a jaw polycube that can grip at most $k-1$ segments of the meat by one of its mouths. There is a building block $\mathbb{J}$ near the center of the jaw.


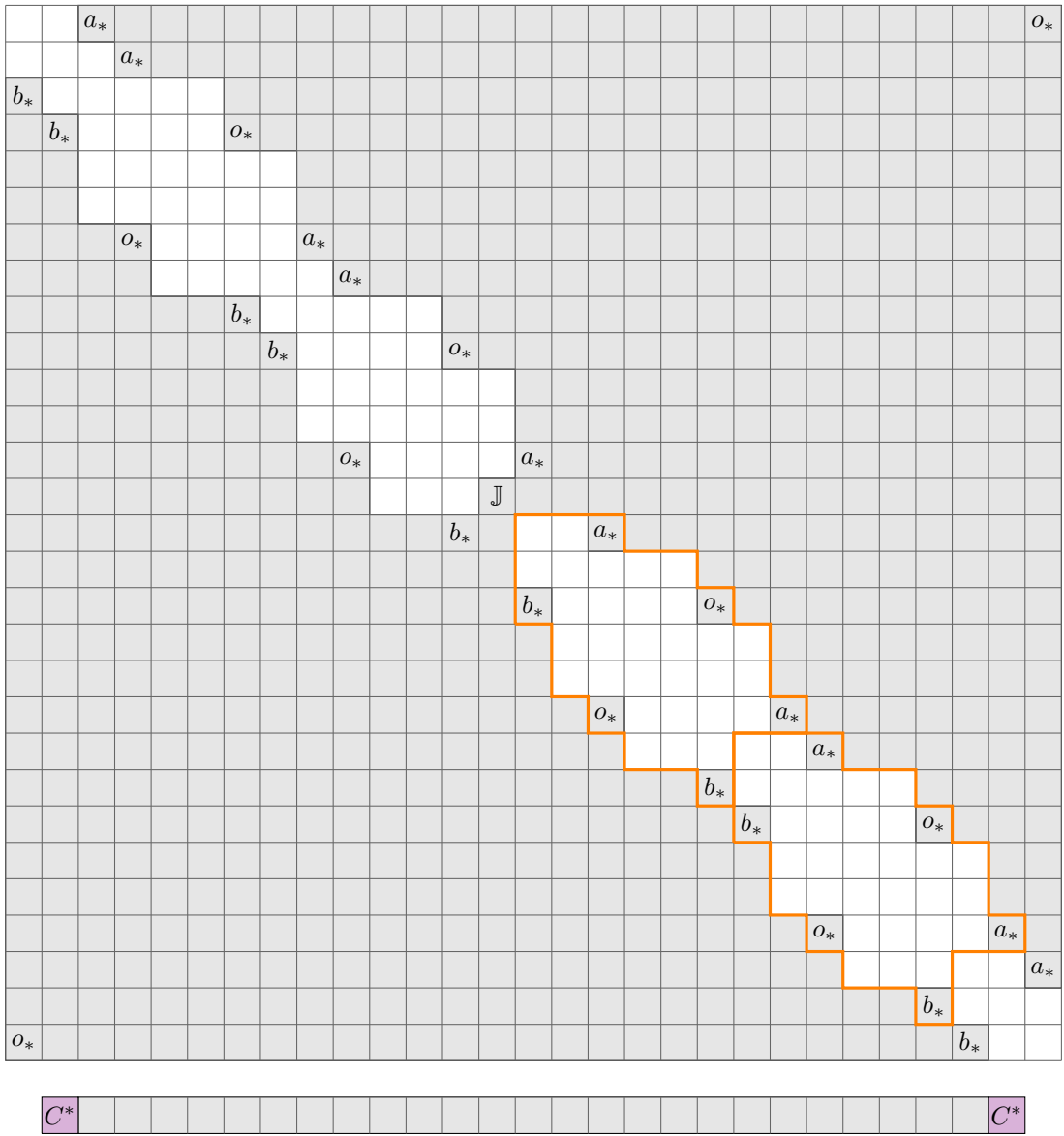
\begin{figure}[H]
\begin{center}
\begin{tikzpicture}[scale=0.5]

\draw [fill=gray!20] (0,0)--(27,0)
--(27,1)--(26,1)--(26,2)--(23,2)--(23,4)--(21,4)--(21,7)--(20,7)--(20,8)--(17,8)--(17,10)--(15,10)--(15,13)--(14,13)--(14,15)--(16,15)--(16,14)--(19,14)--(19,12)--(21,12)--(21,9)--(22,9)--(22,8)--(25,8)--(25,6)--(27,6)--(27,3)--(28,3)--(28,2)
--(29,2)
--(29,29)
--(2,29)
--(2,28)--(3,28)--(3,27)--(6,27)--(6,25)--(8,25)--(8,22)--(9,22)--(9,21)--(12,21)--(12,19)--(14,19)--(14,16)--(13,16)--(13,15)--(10,15)--(10,17)--(8,17)--(8,20)--(7,20)--(7,21)--(4,21)--(4,23)--(2,23)--(2,26)--(1,26)--(1,27)
--(0,27)--(0,0);

\foreach \x in {0,...,29}  
{ 
\draw [ color=black!60] (\x,0)--(\x,29);
\draw [ color=black!60] (0,\x)--(29,\x);
}

\foreach \x in {0,1,6,7,12,14,19,20,25,26}
{
\node at (\x+2.5,28.5-\x) {$a_*$};
\node at (\x+0.5,26.5-\x) {$b_*$};
}

\foreach \x in {0,6,13,19}
{
\node at (\x+6.5,25.5-\x) {$o_*$};
\node at (\x+3.5,22.5-\x) {$o_*$};
}
\node at (0.5,0.5) {$o_*$}; \node at (28.5,28.5) {$o_*$};

\node at (13.5,15.5) {$\mathbb{J}$};


\draw [fill=gray!20] (2,-2)--(27,-2)--(27,-1)--(2,-1)--(2,-2);
\draw [fill=violet!30] (27,-2)--(28,-2)--(28,-1)--(27,-1)--(27,-2);

\draw [fill=violet!30] (1,-2)--(2,-2)--(2,-1)--(1,-1)--(1,-2);
\node at (1.5,-1.5) {$C^*$};
\node at (27.5,-1.5) {$C^*$};

\foreach \x in{3,...,26}
{
\draw [color=black!60] (\x,-2)--(\x,-1);
}

\draw [orange, very thick] (14,12)--(14,15)--(17,15)--(17,14)--(19,14)--(19,13)--(20,13)--(20,12)--(21,12)--(21,10)--(22,10)--(22,9)--(20,9)--(20,7)--(19,7)--(19,8)--(17,8)--(17,9)--(16,9)--(16,10)--(15,10)--(15,12)--(14,12);

\foreach \x in{6}
\foreach \y in{-6}
{
\draw [orange, very thick](\x+14,12+\y)--(\x+14,15+\y)--(\x+17,15+\y)--(\x+17,14+\y)--(\x+19,14+\y)--(\x+19,13+\y)--(\x+20,13+\y)--(\x+20,12+\y)--(\x+21,12+\y)--(\x+21,10+\y)--(\x+22,10+\y)--(\x+22,9+\y)--(\x+20,9+\y)--(\x+20,7+\y)--(\x+19,7+\y)--(\x+19,8+\y)--(\x+17,8+\y)--(\x+17,9+\y)--(\x+16,9+\y)--(\x+16,10+\y)--(\x+15,10+\y)--(\x+15,12+\y)--(\x+14,12+\y);
}

\end{tikzpicture}
\end{center}
\caption{The jaw (with a link at the bottom for comparison).}\label{fig_jaw}
\end{figure}

The spaces unoccupied by the meat polycubes inside the mouths of the jaw can be filled by the \textit{filler} polycube as illustrated in Figure \ref{fig_filler}. It is almost the the same as a segment of the meat except that the color building blocks are replaced by normal $8\times 8\times 8$ polycubes and there is a building block $\mathbb{F}$ at the northwest corner. The building blocks $A$, $B$ and $O$ are attached to the upper half of the main part of the filler, just like those of the meat. The orange lines in Figure \ref{fig_jaw} show how the fillers or segments of a meat can fit inside the mouths of a jaw.

Note that there are building blocks $\mathbb{M}$, $\mathbb{J}$ and $\mathbb{F}$ in the meat, jaw and filler polycubes, respectively. These building blocks ensure another polycube of the same type must be placed directly under or above a meat, a jaw and a filler in order to tile the entire $3$-dimensional space.


\begin{figure}[H]
\begin{center}
\begin{tikzpicture}[scale=0.1]

\draw [fill=gray!20] (0,63)--(18,63)
--(18,54)
--(45,54)--(45,36)--(63,36)
--(63,18)
--(63,9)--(54,9)--(54,0)
--(45,0)
--(27,0)
--(27,18)--(9,18)
--(9,36)
--(9,45)--(0,45)--(0,63);

\draw [color=black!60] (0,54)--(18,54)--(18,18);
\draw [color=black!60] (9,63)--(9,45)--(45,45);
\draw [color=black!60] (9,36)--(45,36)--(45,0);
\draw [color=black!60] (9,27)--(63,27);
\draw [color=black!60] (27,54)--(27,18)--(63,18);
\draw [color=black!60] (36,54)--(36,0);
\draw [color=black!60] (27,9)--(54,9)--(54,36);
\draw [color=black!60] (9,18)--(9,36);
\draw [color=black!60] (63,18)--(63,36);
\draw [color=black!60] (27,0)--(45,0);
\draw [color=black!60] (27,54)--(45,54);

\foreach \x in {0,45}
{
\draw [fill=violet!30] (\x+27,54-\x)--(\x+27,63-\x)--(\x+18,63-\x)--(\x+18,54-\x)--(\x+27,54-\x);
\draw [fill=violet!30] (\x+9,36-\x)--(\x+9,45-\x)--(\x+0,45-\x)--(\x+0,36-\x)--(\x+9,36-\x);
\node at (\x+22.5,59.5-\x) {$A^*$};
\node at (\x+4.5,40.5-\x) {$B^*$};
}

\foreach \x in {0}
{
\draw [fill=violet!30] (\x+54,36-\x)--(\x+54,45-\x)--(\x+45,45-\x)--(\x+45,36-\x)--(\x+54,36-\x);
\node at (\x+49.5,40.5-\x) {$O^*$};

\draw [fill=violet!30] (\x+27,9-\x)--(\x+27,18-\x)--(\x+18,18-\x)--(\x+18,9-\x)--(\x+27,9-\x);
\node at (\x+22.5,13.5-\x) {$O^*$};
}

\node at (4.5,58.5) {$\mathbb{F}$};

\end{tikzpicture}
\end{center}
\caption{A filler.}\label{fig_filler}
\end{figure}
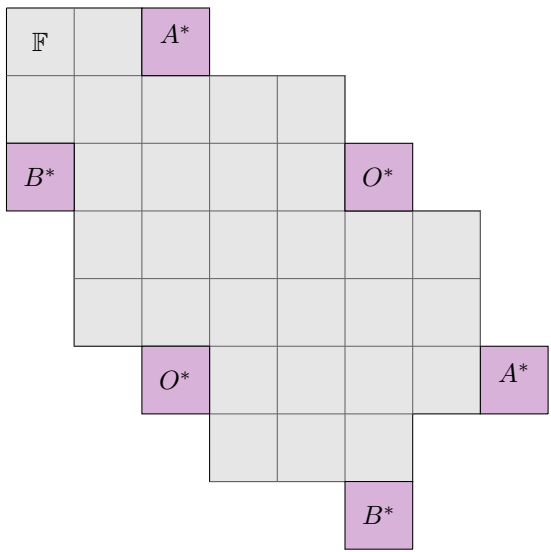

The fourth polycube, the \textit{tooth} (see the right of Figure \ref{fig_link}), is just a building block $C$, which is a half functional cube with a bump. The fifth and sixth polycubes are the \textit{links} which are identical except for the orientation. The east-west oriented link is illustrated on the left of Figure \ref{fig_link}. The south-north oriented link is obtained from the east-west link by a $90$ degree rotation about a vertical axis. There are two building blocks $C$ attached to the upper half of the two ends of the link. As a result, a link can only connect two building blocks $c$ (of two meat polycubes) at the same altitude. And the length of the link must be set properly in order to connect two building blocks. See the bottom of Figure \ref{fig_jaw} for a comparison of the lengths of the link and the jaw. In general, the link is shorter than the jaw by two normal $8\times 8\times 8$ functional cubes.

To summarize, we have constructed a set of $6$ polycubes: a meat, a jaw, a filler, a tooth and two links. As we have mentioned in the previous paragraphs, the sizes of the meat, the jaw, the filler and the links depend on the number of tiles and colors of the given set of Wang tiles.


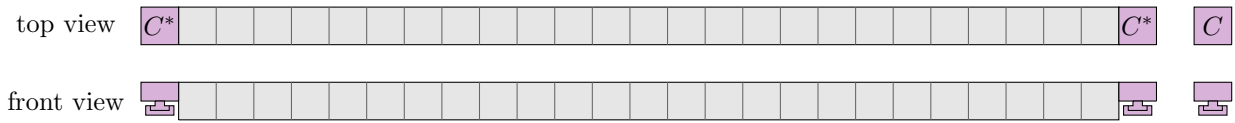
\begin{figure}[H]
\begin{center}
\begin{tikzpicture}[scale=0.5]

\draw [fill=gray!20] (1,0)--(26,0)--(26,1)--(1,1)--(1,0);

\draw [fill=gray!20] (1,-2)--(26,-2)--(26,-1)--(1,-1)--(1,-2);
\draw [fill=violet!30] (27,-1.5)--(26,-1.5)--(26,-1)--(27,-1)--(27,-1.5);

\draw [fill=violet!30] (29,-1.5)--(28,-1.5)--(28,-1)--(29,-1)--(29,-1.5);

\draw [fill=violet!30] (1,-1.5)--(0,-1.5)--(0,-1)--(1,-1)--(1,-1.5);
\node at (-2,-1.5) {front view}; \node at (-2, 0.5) {top view};

\foreach \x in {0,26,28}
{
\draw [fill=violet!30] (\x+0.375,-1.5)--(\x+0.375,-1.625)--(\x+0.125,-1.625)--(\x+0.125,-1.875)--(\x+0.875,-1.875)--(\x+0.875,-1.625)--(\x+0.625,-1.625)--(\x+0.625,-1.5)--(\x+0.375,-1.5); 
\draw (\x+0.25,-1.625)--(\x+0.25,-1.75)--(\x+0.75,-1.75)--(\x+0.75,-1.625);
}

\foreach \x in{2,...,25}
{
\draw [color=black!60] (\x,0)--(\x,1);
\draw [color=black!60] (\x,-2)--(\x,-1);
}

\draw [fill=violet!30] (1,0)--(0,0)--(0,1)--(1,1)--(1,0);\draw [fill=violet!30] (27,0)--(26,0)--(26,1)--(27,1)--(27,0);
\node at (0.5,0.5) {$C^*$};
\node at (26.5,0.5) {$C^*$};


\draw [fill=violet!30] (29,0)--(28,0)--(28,1)--(29,1)--(29,0);
\node at (28.5,0.5) {$C$};

\end{tikzpicture}
\end{center}
\caption{A east-west link and a tooth.}\label{fig_link}
\end{figure}

\subsection{Tiling the Space}

To complete the proof of Theorem \ref{thm_main}, we will show that the only way to tile the $3$-dimensional space with the set of $6$ polycubes we just constructed in the previous subsection is to simulate a tiling of the plane with the corresponding set of Wang tiles.

\begin{itemize}
    \item First of all, we claim that the meat polycubes must be used in any tiling of the entire space. If the tooth or the link is used, then the meat must be used as the only building blocks that can match the building blocks $C$ in the tooth or link are the building blocks $c$ of the meat. If the fillers are used, then in order to match the building blocks $A$, $B$ or $O$ in the filler, the jaws must be used too. Using only fillers and jaws, by the same arguments we will see soon in the next paragraph, we must form a pattern of tiling almost the same as that illustrated in Figure \ref{fig_pattern} except that each orange meat polycube is replaced by $3$ green fillers. Then there is nothing to fill the gaps outside the jaws as the links are longer than the gaps and cannot be squashed into the gaps. So the meat polycubes must be used.


\begin{figure}[H]
\begin{center}
\begin{tikzpicture}[scale=0.1]

\foreach \x in {0,...,3}
\foreach \y in {0,...,3} 
{ 
\draw [ fill=gray!20] (31*\x+0,0+31*\y)--(31*\x+0,29+31*\y)--(31*\x+29,29+31*\y)--(31*\x+29,0+31*\y)--(31*\x+0,0+31*\y);
}

\foreach \x in {-1,...,3}
\foreach \y in {-1,...,3} 
{ 
\draw [fill=orange] (31*\x+28,29+31*\y)--(31*\x+28,28+31*\y)--(31*\x+29,28+31*\y)--(31*\x+29,27+31*\y)--(31*\x+31,27+31*\y)--(31*\x+31,29+31*\y)--(31*\x+33,29+31*\y)--(31*\x+33,31+31*\y)--(31*\x+32,31+31*\y)--(31*\x+32,32+31*\y)--(31*\x+31,32+31*\y)--(31*\x+31,33+31*\y)--(31*\x+29,33+31*\y)--(31*\x+29,31+31*\y)--(31*\x+27,31+31*\y)--(31*\x+27,29+31*\y)--(31*\x+28,29+31*\y);

\node at (31*\x+27.5,30.5+31*\y) {$\cdot$}; \node at (31*\x+27.5,29.5+31*\y) {$\cdot$}; \node at (31*\x+32.5,30.5+31*\y) {$\cdot$}; \node at (31*\x+32.5,29.5+31*\y) {$\cdot$};
\node at (31*\x+29.5,27.5+31*\y) {$\cdot$}; \node at (31*\x+30.5,27.5+31*\y) {$\cdot$}; \node at (31*\x+29.5,32.5+31*\y) {$\cdot$}; \node at (31*\x+30.5,32.5+31*\y) {$\cdot$};
}

\foreach \x in {0,31,62,93}
\foreach \y in {0,1,31,32,62,63} 
{ 
\draw (\x+2,29+\y)--(\x+27,29+\y)--(\x+27,30+\y)--(\x+2,30+\y)--(\x+2,29+\y);
}

\foreach \y in {0,31,62,93}
\foreach \x in {0,1,31,32,62,63} 
{ 
\draw (\x+29,2+\y)--(\x+29,27+\y)--(\x+30,27+\y)--(\x+30,2+\y)--(\x+29,2+\y);
}


\foreach \x in {0}
\foreach \y in {0} 
{ 
\draw [fill=orange] (\x+59,60+\y)--(\x+59,59+\y)--(\x+60,59+\y)--(\x+60,58+\y)--(\x+62,58+\y)--(\x+62,57+\y)--(\x+63,57+\y)--(\x+63,56+\y)--(\x+64,56+\y)--(\x+64,54+\y)--(\x+65,54+\y)--(\x+65,53+\y)--(\x+66,53+\y)--(\x+66,52+\y)--(\x+68,52+\y)--(\x+68,51+\y)--(\x+69,51+\y)--(\x+69,53+\y)--(\x+71,53+\y)--(\x+71,54+\y)--(\x+70,54+\y)--(\x+70,56+\y)--(\x+69,56+\y)--(\x+69,57+\y)--(\x+68,57+\y)--(\x+68,58+\y)--(\x+66,58+\y)--(\x+66,59+\y)--(\x+65,59+\y)--(\x+65,60+\y)--(\x+64,60+\y)--(\x+64,62+\y)--(\x+63,62+\y)--(\x+63,63+\y)
--(\x+62,63+\y)--(\x+62,64+\y)--(\x+60,64+\y)--(\x+60,65+\y)--(\x+59,65+\y)--(\x+59,66+\y)--(\x+58,66+\y)--(\x+58,68+\y)--(\x+57,68+\y)--(\x+57,69+\y)--(\x+56,69+\y)--(\x+56,70+\y)--(\x+54,70+\y)--(\x+54,71+\y)--(\x+53,71+\y)--(\x+51,71+\y)--(\x+51,68+\y)--(\x+52,68+\y)--(\x+52,66+\y)--(\x+53,66+\y)--(\x+53,65+\y)--(\x+54,65+\y)--(\x+54,64+\y)--(\x+56,64+\y)--(\x+56,63+\y)--(\x+57,63+\y)--(\x+57,62+\y)--(\x+58,62+\y)--(\x+58,60+\y)--(\x+59,60+\y);
\draw [color=black!50] (\x+57,63+\y)--(\x+57,65+\y)--(\x+59,65+\y);
\draw [color=black!50] (\x+63,57+\y)--(\x+63,59+\y)--(\x+65,59+\y);
\node at (60.5,58.5) {$\cdot$}; \node at (61.5,58.5) {$\cdot$}; \node at (64.5,55.5) {$\cdot$}; \node at (64.5,54.5) {$\cdot$};
\node at (66.5,52.5) {$\cdot$}; \node at (67.5,52.5) {$\cdot$}; \node at (69.5,55.5) {$\cdot$}; \node at (69.5,54.5) {$\cdot$};
\node at (66.5,57.5) {$\cdot$}; \node at (67.5,57.5) {$\cdot$}; \node at (63.5,61.5) {$\cdot$}; \node at (63.5,60.5) {$\cdot$};
\node at (60.5,63.5) {$\cdot$}; \node at (61.5,63.5) {$\cdot$}; \node at (57.5,67.5) {$\cdot$}; \node at (57.5,66.5) {$\cdot$};
\node at (54.5,69.5) {$\cdot$}; \node at (55.5,69.5) {$\cdot$}; \node at (52.5,67.5) {$\cdot$}; \node at (52.5,66.5) {$\cdot$}; 
\node at (54.5,64.5) {$\cdot$}; \node at (55.5,64.5) {$\cdot$}; \node at (58.5,61.5) {$\cdot$}; \node at (58.5,60.5) {$\cdot$};
}

\foreach \x in {37}
\foreach \y in {-37} 
{ 
\draw [fill=orange] (\x+59,60+\y)--(\x+59,59+\y)--(\x+60,59+\y)--(\x+60,58+\y)--(\x+62,58+\y)--(\x+62,57+\y)--(\x+63,57+\y)--(\x+63,56+\y)--(\x+64,56+\y)--(\x+64,54+\y)--(\x+65,54+\y)--(\x+65,53+\y)--(\x+66,53+\y)--(\x+66,52+\y)--(\x+68,52+\y)--(\x+68,51+\y)--(\x+69,51+\y)--(\x+69,53+\y)--(\x+71,53+\y)--(\x+71,54+\y)--(\x+70,54+\y)--(\x+70,56+\y)--(\x+69,56+\y)--(\x+69,57+\y)--(\x+68,57+\y)--(\x+68,58+\y)--(\x+66,58+\y)--(\x+66,59+\y)--(\x+65,59+\y)--(\x+65,60+\y)--(\x+64,60+\y)--(\x+64,62+\y)--(\x+63,62+\y)--(\x+63,63+\y)
--(\x+62,63+\y)--(\x+62,64+\y)--(\x+60,64+\y)--(\x+60,65+\y)--(\x+59,65+\y)--(\x+59,66+\y)--(\x+58,66+\y)--(\x+58,68+\y)--(\x+57,68+\y)--(\x+57,69+\y)--(\x+56,69+\y)--(\x+56,70+\y)--(\x+54,70+\y)--(\x+54,71+\y)--(\x+53,71+\y)--(\x+51,71+\y)--(\x+51,68+\y)--(\x+52,68+\y)--(\x+52,66+\y)--(\x+53,66+\y)--(\x+53,65+\y)--(\x+54,65+\y)--(\x+54,64+\y)--(\x+56,64+\y)--(\x+56,63+\y)--(\x+57,63+\y)--(\x+57,62+\y)--(\x+58,62+\y)--(\x+58,60+\y)--(\x+59,60+\y);
\draw [color=black!50] (\x+57,63+\y)--(\x+57,65+\y)--(\x+59,65+\y);
\draw [color=black!50] (\x+63,57+\y)--(\x+63,59+\y)--(\x+65,59+\y);
\node at (\x+60.5,58.5+\y) {$\cdot$}; \node at (\x+61.5,58.5+\y) {$\cdot$}; \node at (\x+64.5,55.5+\y) {$\cdot$}; \node at (\x+64.5,54.5+\y) {$\cdot$};
\node at (\x+66.5,52.5+\y) {$\cdot$}; \node at (\x+67.5,52.5+\y) {$\cdot$}; \node at (\x+69.5,55.5+\y) {$\cdot$}; \node at (\x+69.5,54.5+\y) {$\cdot$};
\node at (\x+66.5,57.5+\y) {$\cdot$}; \node at (\x+67.5,57.5+\y) {$\cdot$}; \node at (\x+63.5,61.5+\y) {$\cdot$}; \node at (\x+63.5,60.5+\y) {$\cdot$};
\node at (\x+60.5,63.5+\y) {$\cdot$}; \node at (\x+61.5,63.5+\y) {$\cdot$}; \node at (\x+57.5,67.5+\y) {$\cdot$}; \node at (\x+57.5,66.5+\y) {$\cdot$};
\node at (\x+54.5,69.5+\y) {$\cdot$}; \node at (\x+55.5,69.5+\y) {$\cdot$}; \node at (\x+52.5,67.5+\y) {$\cdot$}; \node at (\x+52.5,66.5+\y) {$\cdot$}; 
\node at (\x+54.5,64.5+\y) {$\cdot$}; \node at (\x+55.5,64.5+\y) {$\cdot$}; \node at (\x+58.5,61.5+\y) {$\cdot$}; \node at (\x+58.5,60.5+\y) {$\cdot$};
}

\foreach \x in {25}
\foreach \y in {6} 
{ 
\draw [fill=orange] (\x+59,60+\y)--(\x+59,59+\y)--(\x+60,59+\y)--(\x+60,58+\y)--(\x+62,58+\y)--(\x+62,57+\y)--(\x+63,57+\y)--(\x+63,56+\y)--(\x+64,56+\y)--(\x+64,54+\y)--(\x+65,54+\y)--(\x+65,53+\y)--(\x+66,53+\y)--(\x+66,52+\y)--(\x+68,52+\y)--(\x+68,51+\y)--(\x+69,51+\y)--(\x+69,53+\y)--(\x+71,53+\y)--(\x+71,54+\y)--(\x+70,54+\y)--(\x+70,56+\y)--(\x+69,56+\y)--(\x+69,57+\y)--(\x+68,57+\y)--(\x+68,58+\y)--(\x+66,58+\y)--(\x+66,59+\y)--(\x+65,59+\y)--(\x+65,60+\y)--(\x+64,60+\y)--(\x+64,62+\y)--(\x+63,62+\y)--(\x+63,63+\y)
--(\x+62,63+\y)--(\x+62,64+\y)--(\x+60,64+\y)--(\x+60,65+\y)--(\x+59,65+\y)--(\x+59,66+\y)--(\x+58,66+\y)--(\x+58,68+\y)--(\x+57,68+\y)--(\x+57,69+\y)--(\x+56,69+\y)--(\x+56,70+\y)--(\x+54,70+\y)--(\x+54,71+\y)--(\x+53,71+\y)--(\x+51,71+\y)--(\x+51,68+\y)--(\x+52,68+\y)--(\x+52,66+\y)--(\x+53,66+\y)--(\x+53,65+\y)--(\x+54,65+\y)--(\x+54,64+\y)--(\x+56,64+\y)--(\x+56,63+\y)--(\x+57,63+\y)--(\x+57,62+\y)--(\x+58,62+\y)--(\x+58,60+\y)--(\x+59,60+\y);
\draw [color=black!50] (\x+57,63+\y)--(\x+57,65+\y)--(\x+59,65+\y);
\draw [color=black!50] (\x+63,57+\y)--(\x+63,59+\y)--(\x+65,59+\y);
\node at (\x+60.5,58.5+\y) {$\cdot$}; \node at (\x+61.5,58.5+\y) {$\cdot$}; \node at (\x+64.5,55.5+\y) {$\cdot$}; \node at (\x+64.5,54.5+\y) {$\cdot$};
\node at (\x+66.5,52.5+\y) {$\cdot$}; \node at (\x+67.5,52.5+\y) {$\cdot$}; \node at (\x+69.5,55.5+\y) {$\cdot$}; \node at (\x+69.5,54.5+\y) {$\cdot$};
\node at (\x+66.5,57.5+\y) {$\cdot$}; \node at (\x+67.5,57.5+\y) {$\cdot$}; \node at (\x+63.5,61.5+\y) {$\cdot$}; \node at (\x+63.5,60.5+\y) {$\cdot$};
\node at (\x+60.5,63.5+\y) {$\cdot$}; \node at (\x+61.5,63.5+\y) {$\cdot$}; \node at (\x+57.5,67.5+\y) {$\cdot$}; \node at (\x+57.5,66.5+\y) {$\cdot$};
\node at (\x+54.5,69.5+\y) {$\cdot$}; \node at (\x+55.5,69.5+\y) {$\cdot$}; \node at (\x+52.5,67.5+\y) {$\cdot$}; \node at (\x+52.5,66.5+\y) {$\cdot$}; 
\node at (\x+54.5,64.5+\y) {$\cdot$}; \node at (\x+55.5,64.5+\y) {$\cdot$}; \node at (\x+58.5,61.5+\y) {$\cdot$}; \node at (\x+58.5,60.5+\y) {$\cdot$};
}


\foreach \x in {0}
\foreach \y in {0} 
{ 
\draw [fill=lime] (\x+69,53+\y)--(\x+69,50+\y)--(\x+70,50+\y)--(\x+70,48+\y)--(\x+71,48+\y)--(\x+71,47+\y)--(\x+72,47+\y)--(\x+72,46+\y)--(\x+74,46+\y)--(\x+74,45+\y)--(\x+75,45+\y)--(\x+75,47+\y)--(\x+77,47+\y)--(\x+77,48+\y)--(\x+76,48+\y)--(\x+76,50+\y)--(\x+75,50+\y)--(\x+75,51+\y)--(\x+74,51+\y)--(\x+74,52+\y)--(\x+72,52+\y)--(\x+72,53+\y)--(\x+69,53+\y);
}

\foreach \x in {-24}
\foreach \y in {24} 
{ 
\draw [fill=lime] (\x+69,53+\y)--(\x+69,50+\y)--(\x+70,50+\y)--(\x+70,48+\y)--(\x+71,48+\y)--(\x+71,47+\y)--(\x+72,47+\y)--(\x+72,46+\y)--(\x+74,46+\y)--(\x+74,45+\y)--(\x+75,45+\y)--(\x+75,47+\y)--(\x+77,47+\y)--(\x+77,48+\y)--(\x+76,48+\y)--(\x+76,50+\y)--(\x+75,50+\y)--(\x+75,51+\y)--(\x+74,51+\y)--(\x+74,52+\y)--(\x+72,52+\y)--(\x+72,53+\y)--(\x+69,53+\y);
}

\foreach \x in {7}
\foreach \y in {-7} 
{ 
\draw [fill=lime] (\x+69,53+\y)--(\x+69,50+\y)--(\x+70,50+\y)--(\x+70,48+\y)--(\x+71,48+\y)--(\x+71,47+\y)--(\x+72,47+\y)--(\x+72,46+\y)--(\x+74,46+\y)--(\x+74,45+\y)--(\x+75,45+\y)--(\x+75,47+\y)--(\x+77,47+\y)--(\x+77,48+\y)--(\x+76,48+\y)--(\x+76,50+\y)--(\x+75,50+\y)--(\x+75,51+\y)--(\x+74,51+\y)--(\x+74,52+\y)--(\x+72,52+\y)--(\x+72,53+\y)--(\x+69,53+\y);
}
\foreach \x in {13}
\foreach \y in {-13} 
{ 
\draw [fill=lime] (\x+69,53+\y)--(\x+69,50+\y)--(\x+70,50+\y)--(\x+70,48+\y)--(\x+71,48+\y)--(\x+71,47+\y)--(\x+72,47+\y)--(\x+72,46+\y)--(\x+74,46+\y)--(\x+74,45+\y)--(\x+75,45+\y)--(\x+75,47+\y)--(\x+77,47+\y)--(\x+77,48+\y)--(\x+76,48+\y)--(\x+76,50+\y)--(\x+75,50+\y)--(\x+75,51+\y)--(\x+74,51+\y)--(\x+74,52+\y)--(\x+72,52+\y)--(\x+72,53+\y)--(\x+69,53+\y);
}

\foreach \x in {25}
\foreach \y in {6} 
{ 
\draw [fill=lime] (\x+69,53+\y)--(\x+69,50+\y)--(\x+70,50+\y)--(\x+70,48+\y)--(\x+71,48+\y)--(\x+71,47+\y)--(\x+72,47+\y)--(\x+72,46+\y)--(\x+74,46+\y)--(\x+74,45+\y)--(\x+75,45+\y)--(\x+75,47+\y)--(\x+77,47+\y)--(\x+77,48+\y)--(\x+76,48+\y)--(\x+76,50+\y)--(\x+75,50+\y)--(\x+75,51+\y)--(\x+74,51+\y)--(\x+74,52+\y)--(\x+72,52+\y)--(\x+72,53+\y)--(\x+69,53+\y);
}
\foreach \x in {31}
\foreach \y in {0} 
{ 
\draw [fill=lime] (\x+69,53+\y)--(\x+69,50+\y)--(\x+70,50+\y)--(\x+70,48+\y)--(\x+71,48+\y)--(\x+71,47+\y)--(\x+72,47+\y)--(\x+72,46+\y)--(\x+74,46+\y)--(\x+74,45+\y)--(\x+75,45+\y)--(\x+75,47+\y)--(\x+77,47+\y)--(\x+77,48+\y)--(\x+76,48+\y)--(\x+76,50+\y)--(\x+75,50+\y)--(\x+75,51+\y)--(\x+74,51+\y)--(\x+74,52+\y)--(\x+72,52+\y)--(\x+72,53+\y)--(\x+69,53+\y);
}

\node at (61,61) {\textbf{2}}; \node at (92,61) {\textbf{3}};  \node at (92,30) {\textbf{1}};

\end{tikzpicture}
\end{center}
\caption{The tiling pattern of a floor.}\label{fig_pattern}
\end{figure}
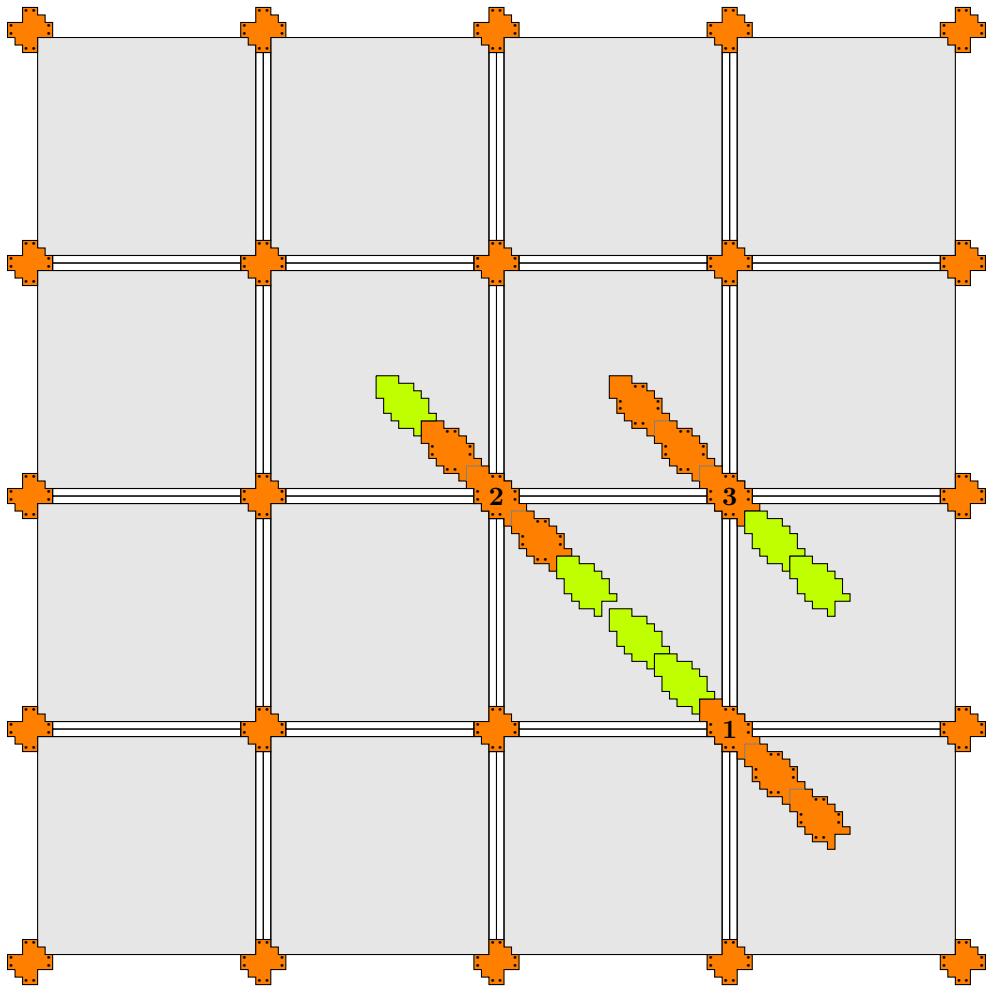

    \item Because building blocks $A$ and $B$ only appear inside the mouths of the jaws, a meat polycube must be gripped by two jaws from two sides (northwest and southeast directions), leaving a single simulated Wang tile outside. There is a degree of freedom to choose which of the $3$ simulated Wang tiles to be left out (see Figure \ref{fig_pattern}). The building blocks $O$ outside the jaws enforce another two jaws to be placed in the northeast and southwest directions. By repeating the above arguments and extending the partial tiling outwards, we get the pattern illustrated in Figure \ref{fig_pattern}. As we have mentioned in the previous paragraph, if any one of the orange meat is replaced by $3$ fillers, then there is no way to fill the gaps outside the jaws and it fails to tile the entire space.

    \item The tiling pattern in Figure \ref{fig_pattern} that extends horizontally and infinitely is called a \textit{floor} of the space tiling. A floor consists of $8$ unit layers (ignoring the bumps of building blocks $\mathbb{M}$, $\mathbb{J}$ and $\mathbb{F}$) as most of the building blocks of the meats, jaws and fillers are $8\times 8\times 8$ polycubes. The small solid dots on the orange meats in Figure \ref{fig_pattern} denote the building blocks $c$. Most of the fillers and meats inside the jaws are omitted in Figure \ref{fig_pattern}. The building blocks $\mathbb{M}$, $\mathbb{J}$ and $\mathbb{F}$ enforce the tiling must form a vertically two-way infinite stack of identical floors, where each floor is in the pattern illustrated in Figure \ref{fig_pattern}.

\item Finally, there are some leftover gaps by the stacks of floors. The small gaps inside the jaws can be always filled by tooth polycubes. The gaps outside the jaws can be filled by link polycubes without gaps or overlaps if and only if the building blocks $c$ connected by the links are at the same altitude. This is equivalent to that the colors encoded by the building block $c$ must be the same for any pair of adjacent sides of the simulated Wang tiles outside the jaws.

\end{itemize}

So we have reduced each instance of Wang's domino problem (i.e. a set $W$ of Wang tiles) to an instance of the translational tiling problem of the space (i.e. a set $P$ of $6$ polycubes). As Wang's domino problem is undecidable (Thoerem \ref{thm_berger}), the translational tiling problem with a set of $6$ polycubes is also undecidable. \end{proof}

\noindent \textbf{Remark 1}. Note that in the proof of Theorem \ref{thm_main}, the link polycube may or may not be aligned (regarding the altitude) with the floors. If a link polycube connects two building blocks $c$ attached to the lower part of the meats, then it is aligned with the floor (see the top of Figure \ref{fig_misalign}, the link lies in the same $8$ unit layers in space with the meats that it connects). If a link polycube connects two building blocks $c$ attached to the upper part of the meats, then it is misaligned with the floors (see the bottom of Figure \ref{fig_misalign}). Allowing misalignment is crucial in proving our main result, it helps to decrease the number of tooth polycubes to just one. Therefore, the techniques we employ to show the undecidability of translational tiling with just $6$ tiles make use of the $3$-dimensional space, which may not be applicable to the translational tiling of the plane.


\begin{figure}[H]
\begin{center}
\begin{tikzpicture}[scale=0.5]

\draw [fill=gray!20] (1,0)--(26,0)--(26,1)--(1,1)--(1,0);

\draw [fill=gray!20] (1,-2)--(26,-2)--(26,-1)--(1,-1)--(1,-2);
\draw [fill=violet!30] (27,-1.5)--(26,-1.5)--(26,-1)--(27,-1)--(27,-1.5);

\draw [fill=violet!30] (1,-1.5)--(0,-1.5)--(0,-1)--(1,-1)--(1,-1.5);
 
\draw [fill=orange] (-2,0)--(1,0)--(1,0.5)--(0, 0.5)--(0,1)--(-2,1);
\draw [fill=orange] (29,0)--(26,0)--(26,0.5)--(27, 0.5)--(27,1)--(29,1);

\draw [fill=orange] (-2,-2.5)--(0,-2.5)--(0,-2)--(1, -2)--(1,-1.5)--(-2,-1.5);
\draw [fill=orange] (29,-2.5)--(27,-2.5)--(27,-2)--(26, -2)--(26,-1.5)--(29,-1.5);

\foreach \x in{2,...,25}
{
\draw [color=black!60] (\x,0)--(\x,1);
\draw [color=black!60] (\x,-2)--(\x,-1);
}

\draw [fill=violet!30] (1,1)--(0,1)--(0,0.5)--(1,0.5)--(1,1);\draw [fill=violet!30] (27,1)--(26,1)--(26,0.5)--(27,0.5)--(27,1);

\end{tikzpicture}
\end{center}
\caption{The links may be misaligned with the floors (side view).}\label{fig_misalign}
\end{figure}
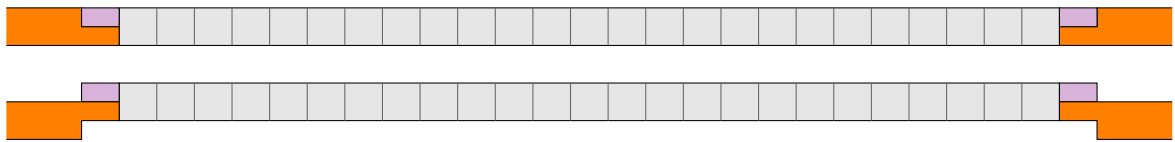

\noindent \textbf{Remark 2}. As a consequence of Theorem \ref{thm_main}, translational tiling of $\mathbb{Z}^n (n\geq 4)$ with a set of $6$ tiles is undecidable.

\section{Conclusion}\label{sec_conclude}

In this paper, we show that translational tiling of $\mathbb{Z}^3$ is undecidable with a set of $6$ tiles, even if all the tiles are corresponding to connected polycubes. For the undecidability of general translational tiling of $\mathbb{Z}^n$ with a set of $k$ tiles, the following problems are interesting for future study.

\begin{Problem}
    Is it undecidable for translational tiling of $\mathbb{Z}^3$ or $\mathbb{Z}^4$ with a set of $5$ tiles?
\end{Problem}

\begin{Problem}[\cite{gt24b}]
    Is there a fixed $n$ such that translational tilings of $\mathbb{Z}^n$ with a single tile is undecidable?
\end{Problem}

\section*{Acknowledgements}
The first author was supported by the Research Fund of Guangdong University of Foreign Studies (Nos. 297-ZW200011 and 297-ZW230018), and the National Natural Science Foundation of China (No. 61976104).



\newpage
\section*{Appendix}

Supplementary figures are given in the appendix. Figure \ref{fig_segment} illustrates a filler or a segment of the meat polycube in which $4$ consecutive building blocks $c$ are used to encode colors. The building blocks $\mathbb{F}$, $\mathbb{M}$ and $c$ may be replaced by normal $8\times 8\times 8$ functional cubes depending on whether it is a filler or a segment of the meat. More details of a meat and four surrounding jaws are illustrated in Figure \ref{fig_meat_in_jaw}, where the meat is shown in orange lines.


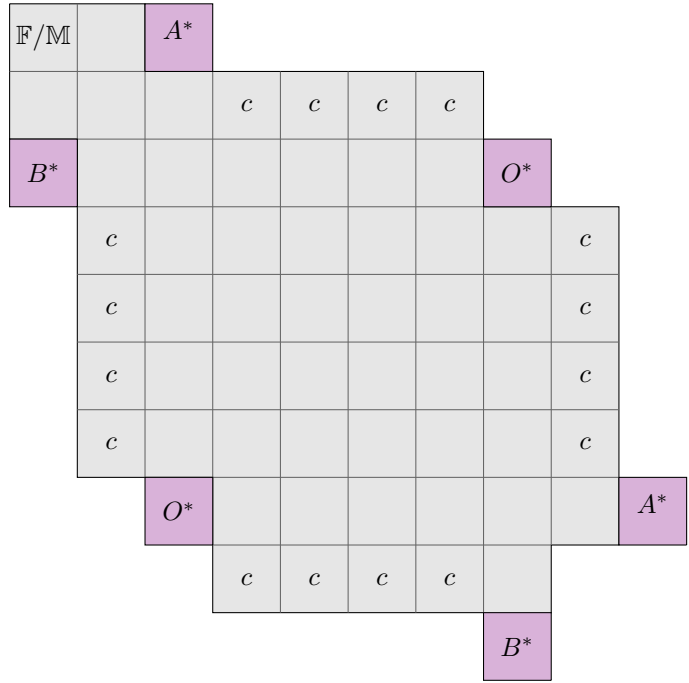
\begin{figure}[H]
\begin{center}
\begin{tikzpicture}[scale=0.1]

\draw [fill=gray!20] (0,63)--(18,63)
--(18,54)
--(63,54)--(63,36)--(81,36)
--(81,0)
--(81,-9)--(72,-9)--(72,-18)
--(27,-18)
--(27,0)
--(9,0)--(9,36)
--(9,45)--(0,45)--(0,63);

\draw [color=black!60] (0,54)--(18,54)--(18,0);
\draw [color=black!60] (9,63)--(9,45)--(63,45);
\draw [color=black!60] (9,36)--(63,36)--(63,-18);
\draw [color=black!60] (9,27)--(81,27);
\draw [color=black!60] (9,18)--(81,18);
\draw [color=black!60] (9,9)--(81,9);

\draw [color=black!60] (27,0)--(81,0);
\draw [color=black!60] (27,-9)--(72,-9);

\draw [color=black!60] (27,0)--(27,54);
\draw [color=black!60] (36,-18)--(36,54);
\draw [color=black!60] (45,-18)--(45,54);
\draw [color=black!60] (54,-18)--(54,54);
\draw [color=black!60] (72,-9)--(72,36);

\foreach \x in {0,63}
{
\draw [fill=violet!30] (\x+27,54-\x)--(\x+27,63-\x)--(\x+18,63-\x)--(\x+18,54-\x)--(\x+27,54-\x);
\draw [fill=violet!30] (\x+9,36-\x)--(\x+9,45-\x)--(\x+0,45-\x)--(\x+0,36-\x)--(\x+9,36-\x);
\node at (\x+22.5,59.5-\x) {$A^*$};
\node at (\x+4.5,40.5-\x) {$B^*$};
}

\foreach \x in {18}
{
\draw [fill=violet!30] (\x+54,54-\x)--(\x+54,63-\x)--(\x+45,63-\x)--(\x+45,54-\x)--(\x+54,54-\x);
\node at (\x+49.5,58.5-\x) {$O^*$};
}

\foreach \x in {0}
{
\draw [fill=violet!30] (\x+27,-9-\x)--(\x+27,0-\x)--(\x+18,0-\x)--(\x+18,-9-\x)--(\x+27,-9-\x);
\node at (\x+22.5,-4.5-\x) {$O^*$};
}

\node at (4.5,58.5) {$\mathbb{F}$/$\mathbb{M}$};

\foreach \x in {27,36,45,54}
{
\node at (\x+4.5,49.5) {$c$};
\node at (\x+4.5,-13.5) {$c$};
}

\foreach \x in {0,9,18,27}
{
\node at (13.5,4.5+\x) {$c$};
\node at (76.5,4.5+\x) {$c$};
}

\end{tikzpicture}
\end{center}
\caption{A filler or a segment of a meat.}\label{fig_segment}
\end{figure}


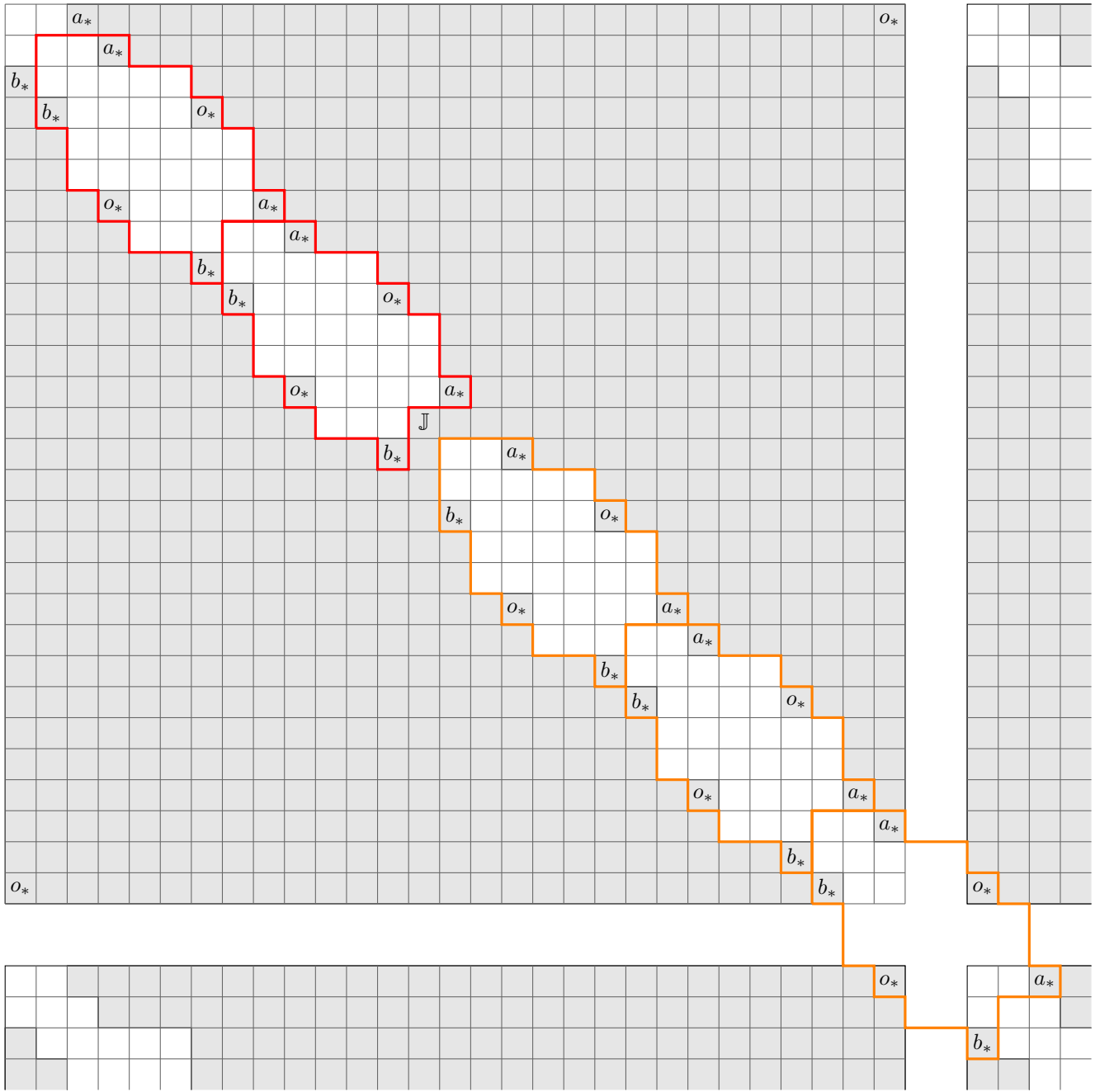
\begin{figure}[H]
\begin{center}
\begin{tikzpicture}[scale=0.5]

\draw [fill=gray!20] (0,0)--(27,0)
--(27,1)--(26,1)--(26,2)--(23,2)--(23,4)--(21,4)--(21,7)--(20,7)--(20,8)--(17,8)--(17,10)--(15,10)--(15,13)--(14,13)--(14,15)--(16,15)--(16,14)--(19,14)--(19,12)--(21,12)--(21,9)--(22,9)--(22,8)--(25,8)--(25,6)--(27,6)--(27,3)--(28,3)--(28,2)
--(29,2)
--(29,29)
--(2,29)
--(2,28)--(3,28)--(3,27)--(6,27)--(6,25)--(8,25)--(8,22)--(9,22)--(9,21)--(12,21)--(12,19)--(14,19)--(14,16)--(13,16)--(13,15)--(10,15)--(10,17)--(8,17)--(8,20)--(7,20)--(7,21)--(4,21)--(4,23)--(2,23)--(2,26)--(1,26)--(1,27)
--(0,27)--(0,0);

\foreach \x in {0,...,29}  
{ 
\draw [ color=black!60] (\x,0)--(\x,29);
\draw [ color=black!60] (0,\x)--(29,\x);
}

\foreach \x in {0,1,6,7,12,14,19,20,25,26}
{
\node at (\x+2.5,28.5-\x) {$a_*$};
\node at (\x+0.5,26.5-\x) {$b_*$};
}

\foreach \x in {0,6,13,19}
{
\node at (\x+6.5,25.5-\x) {$o_*$};
\node at (\x+3.5,22.5-\x) {$o_*$};
}
\node at (0.5,0.5) {$o_*$}; \node at (28.5,28.5) {$o_*$};

\node at (13.5,15.5) {$\mathbb{J}$};


\draw  (31,-6)--(31,-2)--(35,-2);

\draw (0,-6)--(0,-2)--(29,-2)--(29,-6);
\draw (35,0)--(31,0)--(31,29)--(35,29);

\draw [fill=gray!20] (29,-6)--(29,-2)--(2,-2)--(2,-3)--(3,-3)--(3,-4)--(6,-4)--(6,-6);

\draw [fill=gray!20] (0,-6)--(0,-4)--(1,-4)--(1,-5)--(2,-5)--(2,-6);
\draw [fill=gray!20] (31+0,-6)--(31+0,-4)--(31+1,-4)--(31+1,-5)--(31+2,-5)--(31+2,-6);
\draw [fill=gray!20] (35,29)--(33,29)--(33,28)--(34,28)--(34,27)--(35,27);
\draw [fill=gray!20] (35,29-31)--(33,29-31)--(33,28-31)--(34,28-31)--(34,27-31)--(35,27-31);

\draw [fill=gray!20] (35,0)--(31,0)--(31,27)--(32,27)--(32,26)--(33,26)--(33,23)--(35,23);

\foreach \x in {-3,-4,-5}  
{ 
\draw [ color=black!60] (0,\x)--(29,\x);\draw [ color=black!60] (31,\x)--(35,\x);
}
\foreach \x in {32,33,34}  
{ 
\draw [ color=black!60] (\x,0)--(\x,29);
\draw [ color=black!60] (\x,-2)--(\x,-6);

}
\foreach \x in {1,...,28}
{
\draw [ color=black!60] (\x,-2)--(\x,-6);
\draw [ color=black!60] (31,\x)--(35,\x);
}

\draw [orange, very thick] (14,12)--(14,15)--(17,15)--(17,14)--(19,14)--(19,13)--(20,13)--(20,12)--(21,12)--(21,10)--(22,10)--(22,9)--(20,9)--(20,7)--(19,7)--(19,8)--(17,8)--(17,9)--(16,9)--(16,10)--(15,10)--(15,12)--(14,12);

\foreach \x in{6}
\foreach \y in{-6}
{
\draw [orange, very thick](\x+14,12+\y)--(\x+14,15+\y)--(\x+17,15+\y)--(\x+17,14+\y)--(\x+19,14+\y)--(\x+19,13+\y)--(\x+20,13+\y)--(\x+20,12+\y)--(\x+21,12+\y)--(\x+21,10+\y)--(\x+22,10+\y)--(\x+22,9+\y)--(\x+20,9+\y)--(\x+20,7+\y)--(\x+19,7+\y)--(\x+19,8+\y)--(\x+17,8+\y)--(\x+17,9+\y)--(\x+16,9+\y)--(\x+16,10+\y)--(\x+15,10+\y)--(\x+15,12+\y)--(\x+14,12+\y);
}

\foreach \x in{12}
\foreach \y in{-12}
{
\draw [orange, very thick](\x+14,12+\y)--(\x+14,15+\y)--(\x+17,15+\y)--(\x+17,14+\y)--(\x+19,14+\y)--(\x+19,13+\y)--(\x+20,13+\y)--(\x+20,12+\y)--(\x+21,12+\y)--(\x+21,10+\y)--(\x+22,10+\y)--(\x+22,9+\y)--(\x+20,9+\y)--(\x+20,7+\y)--(\x+19,7+\y)--(\x+19,8+\y)--(\x+17,8+\y)--(\x+17,9+\y)--(\x+16,9+\y)--(\x+16,10+\y)--(\x+15,10+\y)--(\x+15,12+\y)--(\x+14,12+\y);
}

\foreach \x in{-13}
\foreach \y in{13}
{
\draw [red, very thick](\x+14,12+\y)--(\x+14,15+\y)--(\x+17,15+\y)--(\x+17,14+\y)--(\x+19,14+\y)--(\x+19,13+\y)--(\x+20,13+\y)--(\x+20,12+\y)--(\x+21,12+\y)--(\x+21,10+\y)--(\x+22,10+\y)--(\x+22,9+\y)--(\x+20,9+\y)--(\x+20,7+\y)--(\x+19,7+\y)--(\x+19,8+\y)--(\x+17,8+\y)--(\x+17,9+\y)--(\x+16,9+\y)--(\x+16,10+\y)--(\x+15,10+\y)--(\x+15,12+\y)--(\x+14,12+\y);
}
\foreach \x in{-7}
\foreach \y in{7}
{
\draw [red, very thick](\x+14,12+\y)--(\x+14,15+\y)--(\x+17,15+\y)--(\x+17,14+\y)--(\x+19,14+\y)--(\x+19,13+\y)--(\x+20,13+\y)--(\x+20,12+\y)--(\x+21,12+\y)--(\x+21,10+\y)--(\x+22,10+\y)--(\x+22,9+\y)--(\x+20,9+\y)--(\x+20,7+\y)--(\x+19,7+\y)--(\x+19,8+\y)--(\x+17,8+\y)--(\x+17,9+\y)--(\x+16,9+\y)--(\x+16,10+\y)--(\x+15,10+\y)--(\x+15,12+\y)--(\x+14,12+\y);
}

\foreach \x in {31}
{
\node at (\x+2.5,28.5-\x) {$a_*$};
\node at (\x+0.5,26.5-\x) {$b_*$};
}

\foreach \x in {25}
{
\node at (\x+6.5,25.5-\x) {$o_*$};
\node at (\x+3.5,22.5-\x) {$o_*$};
}
 
\end{tikzpicture}
\end{center}
\caption{A meat and its surrounding jaws.}\label{fig_meat_in_jaw}
\end{figure}

\end{document}